\documentclass[12pt,oneside]{amsart}
\usepackage{amssymb}
\usepackage{amsmath}
\usepackage{amsthm}
\usepackage{amscd}

\usepackage{longtable}

\theoremstyle{plain}
\newtheorem{theorem}{Theorem}
\newtheorem{lemma}{Lemma}
\newtheorem{corollary}{Corollary}

\theoremstyle{definition}

\theoremstyle{remark}

\begin{document}
\title
[The First Correction Term in Bohr--Sommerfeld Lagrangian States]
{The First Correction Term in the Asymptotic Expansion of Bohr--Sommerfeld Lagrangian States}
\author{Yusaku Tiba}
\date{}

\begin{abstract}
Let $\Lambda$ be a compact Bohr--Sommerfeld Lagrangian submanifold of a compact K\"ahler manifold equipped with a holomorphic prequantum line bundle. 
We study the asymptotic expansion of the Lagrangian states associated with $\Lambda$. 
In particular, we compute explicitly the first nontrivial correction term and show that it is expressed in terms of geometric invariants of the ambient K\"ahler manifold and the Lagrangian submanifold, including their scalar curvatures, the second fundamental form, and the mean curvature. 
As a consequence, we obtain the corresponding second-order asymptotic formula for the $L^2$-norm of the Lagrangian states. 
\end{abstract}

\maketitle

\subjclass{{\bf 2020 Mathematics Subject Classification.} 53D50, 32Q15, 53D12}


\section{Introduction}\label{section:1}
Let $(M, \omega)$ be a compact K\"ahler manifold of complex dimension $n$ with complex structure $J$.  
Let $(L, h)$ be a holomorphic line bundle over $M$ with a Hermitian metric $h$, 
and let $\nabla$ be the Chern connection of $(L, h)$.  
We denote the curvature form of $(L, \nabla)$ by $F^{\nabla}$.  
Throughout this paper, we assume $(L, \nabla)$ is a holomorphic prequantum line bundle, that is, 
$F^{\nabla} = -2\sqrt{-1} \omega$.    
For each positive integer $k$, let
$
L^k:=L^{\otimes k}
$
denote the $k$-th tensor power of $L$. 
We denote by $L^2(M,L^k)$ the Hilbert space of square-integrable sections of $L^k$, equipped with the norm
$
\|s\|_{h^k}^2
:=
\int_M |s(z)|_{h^k}^2 \frac{\omega^n}{n!}.  
$
Let
$
\Pi_k:L^2(M,L^k)\longrightarrow H^0(M,L^k)
$
be the orthogonal projection onto the space of holomorphic sections. We denote the Schwartz kernel of $\Pi_k$, called the Bergman kernel, by $K_k(z,w)$. 
The Bergman kernel $K_k$ is a smooth section of
$
L^k\boxtimes \overline{L^k}
$
over $M\times M$.

The asymptotic expansion of the Bergman kernel as $k\to\infty$ has been extensively studied. 
In particular, the second coefficient in its diagonal asymptotic expansion is equal to $1/8$ times the scalar curvature of $M$(cf.~\cite{Ber-Ber-Sjo}).

The aim of this paper is to establish an analogous result for the Lagrangian states introduced below. More precisely, we show that the second term in their asymptotic expansion can also be expressed in terms of geometric invariants associated with $M$ and the Lagrangian submanifold $\Lambda$.

Let $\Lambda$ be a compact Lagrangian submanifold of $M$. 
Since
$
\left.F^\nabla\right|_{\Lambda}=-2\sqrt{-1}\left.\omega\right|_{\Lambda}=0,
$
the restriction $\left.(L,\nabla)\right|_{\Lambda}$ is a flat Hermitian line bundle. 
We call $\Lambda$ a \emph{Bohr--Sommerfeld Lagrangian submanifold} if $\left.(L,\nabla)\right|_{\Lambda}$ is trivial as a flat line bundle, or equivalently, if it admits a nowhere-vanishing parallel section. 
This condition is the K\"ahler analogue of the Bohr--Sommerfeld condition for Lagrangian submanifolds of cotangent bundles arising in WKB analysis.
Let $\zeta$ be the parallel unit section of $L|_{\Lambda}$ on $\Lambda$.  
For a function $f$ on $\Lambda$, we define the associated Lagrangian state $s_{f,k}$ by
\[
s_{f,k}(z):=\left(\frac{\pi}{2k}\right)^{n/2} \int_{\Lambda} K_{k}(z, x)f(x) \zeta^{k}(x)dv_{\Lambda}(x).
\]
Here, $dv_{\Lambda}$ is the volume density on $\Lambda$, 
and the prefactor
$
\left(\frac{\pi}{2k}\right)^{n/2}
$
is introduced as a normalization factor.
The family $\{s_{f,k}\}_{k\in \mathbb{N}}$ forms a sequence of holomorphic sections that becomes asymptotically localized along $\Lambda$ as $k\to\infty$, 
and this construction may be regarded as the K\"ahler analogue of a WKB state in the cotangent bundle setting. 

The Lagrangian state admits a natural interpretation in terms of the
restriction operator. 
Using the parallel unit section $\zeta$, 
we identify
$L^k|_{\Lambda}$ with the trivial line bundle over $\Lambda$, and define
$
R_k:H^0(M,L^k) \longrightarrow L^2(\Lambda)
$
by
$
R_k s:=\frac{s|_{\Lambda}}{\zeta^k}.
$
The adjoint operator
$
R_k^*:L^2(\Lambda)\longrightarrow H^0(M,L^k)
$
is given by
$
R_k^*f(z)
=
\int_{\Lambda} K_k(z,x)f(x)\zeta^k(x)\,dv_{\Lambda}(x)$.  
Therefore, the Lagrangian state defined above can be written as
$
s_{f,k}
=
\left(\frac{\pi}{2k}\right)^{n/2}R_k^*f.
$
Since the restriction operator $R_k$ is not an isometry, the composition
$R_kR_k^*$ is not the identity operator on $L^2(\Lambda)$. 
Nevertheless, after the normalization appearing in the definition of $s_{f,k}$, one has
\begin{equation*}
\left(\frac{\pi}{2k}\right)^{n/2}R_kR_k^*
=
I+O(k^{-1})
\end{equation*}
as $k\to\infty$. 
In fact, it is known that this operator admits a full asymptotic expansion in powers of $k^{-1}$ (e.\,g. \cite{Bor-Pau-Uri}, \cite{Deb-Pao}, \cite{Ioo}, \cite{Pao}). 
Our main result gives an explicit formula for the
coefficient of $k^{-1}$ in this asymptotic expansion in terms of
geometric invariants of $M$ and $\Lambda$.

Before stating our main result, we describe the coordinate system used in the formulation of the expansion.  
Let $p\in \Lambda$, and let $X_1,\ldots,X_n$ be a local orthonormal frame of $T\Lambda$ near $p$. 
Since $\Lambda$ is Lagrangian, $JX_1,\ldots,JX_n$ form an orthonormal frame of the normal bundle $N\Lambda$. 
We parametrize the normal directions to $\Lambda$ at $p$ by
\[
y=(y_1,\ldots,y_n)
\longmapsto
\exp_p\left(\sum_{j=1}^n y_jJX_j(p)\right),
\]
where $\exp$ denotes the Riemannian exponential map of the K\"ahler metric.

We choose a smooth section
$\widetilde{\zeta}$ of $L$ over a neighborhood $U$ of $\Lambda$ such that
$\widetilde{\zeta}|_{\Lambda}=\zeta$,
$\overline{\partial}\widetilde{\zeta}
$ 
vanishes to any order on $\Lambda$, and 
$
\nabla\widetilde{\zeta}=0 
$
on $\Lambda$
(cf.~Proposition~1 of \cite{Tib1}).  
In what follows, by a slight abuse of notation, we denote this extension
again by $\zeta$.
Let $f \in C^{\infty}(\Lambda)$.  
For related pointwise scaling asymptotics, see also \cite{Pao}.
In \cite{Tib2}, we obtained the following expansion theorem:
\begin{theorem}[\cite{Tib2}]\label{theorem:1}
There exist polynomials $c_{p, j}(y)$ in $y \in \mathbb{R}^n$ whose coefficients depend smoothly on $p \in \Lambda$ such that  
$c_{p, 2j+1} (0) = 0$ ($j = 0, 1, \ldots$), and 
\[
\frac{s_{f, k}}{\zeta^k}\left(\exp_p\left(\frac{1}{\sqrt{k}}\sum_{j=1}^n y_j JX_{j}(p) \right)\right)
= f(p) + \sum_{j=1}^N \frac{c_{p, j}(y)}{k^{j/2}} + o\left(\frac{1}{k^{N/2}}\right).  
\]
as $k\to\infty$.
\end{theorem}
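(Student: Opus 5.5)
We will prove Theorem~\ref{theorem:1} by a stationary phase analysis of the integral defining $s_{f,k}$, using the off-diagonal asymptotic expansion of the Bergman kernel near the diagonal.

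\emph{Step 1: the Bergman kernel near $\Lambda$.} For $z$ in a fixed neighborhood $U$ of $\Lambda$ we use the Boutet de Monvel--Sj\"ostrand/Zelditch/Berman--Berndtsson--Sj\"ostrand parametrix. With $\psi(z,\bar w)$ the almost-holomorphic extension of the local K\"ahler potential, it reads
\[
K_k(z,x) = e^{k\psi(z,\bar x)}\Bigl(\frac{2k}{\pi}\Bigr)^{n}\Bigl(b_0+\tfrac{b_1}{k}+\cdots\Bigr)(z,\bar x)\,e_L^k(z)\otimes\overline{e_L^k(x)}+O(k^{-\infty}).
\]
Here $e_L$ is a local frame, and $|K_k(z,x)|$ is $O(k^{-\infty})$ once $d(z,x)$ stays away from $0$. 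So we cut off to $d(z,x)\le \delta$. Because of the normalization $(\pi/2k)^{n/2}$ and the integration over the $n$-dimensional $\Lambda$, the power of $k$ in front comes out as $k^{n}\cdot k^{-n/2}\cdot k^{-n/2}=O(1)$ after rescaling, as expected.

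\emph{Step 2: the phase.} Write $\zeta^k(x)/\zeta^k(z)$ in the frame $e_L$. Then
\[
\frac{s_{f,k}}{\zeta^k}(z)=\Bigl(\frac{\pi}{2k}\Bigr)^{n/2}\Bigl(\frac{2k}{\pi}\Bigr)^n\int_\Lambda e^{k\Phi(z,x)}\, b(z,x,k)\,f(x)\,dv_\Lambda(x)
\]
with phase $\Phi(z,x)=\psi(z,\bar x)-\log|e_L(x)|^{-2}\cdots+\log(\zeta(x)/\zeta(z))$. The point is that this phase is a well-defined function, independent of the frame. The Bohr--Sommerfeld condition together with $\nabla\zeta=0$ on $\Lambda$ gives $\Phi(x,x)=0$ and $d_x\Phi=0$ on the diagonal of $\Lambda$. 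Moreover $\overline\partial\zeta$ vanishes to infinite order, so the Taylor expansion is clean. We then compute $\Phi$ at $z=\exp_p(k^{-1/2}\sum y_jJX_j)$, $x=\exp_p(k^{-1/2}\sum u_jX_j)$, using the normal coordinates of the statement along $\Lambda$. To second order one gets
\[
k\Phi=-\tfrac12|u|^2-\sqrt{-1}\langle u,y\rangle\cdot(\text{const})+\tfrac12|y|^2\cdot(\text{const})+\cdots,
\]
a Gaussian in $u$ whose real part is negative definite. Higher Taylor terms contribute homogeneous polynomials of degree $m+2$ in $(u,y)$ multiplied by $k^{-m/2}$.

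\emph{Step 3: expansion.} After the substitution $x\leftrightarrow u$, we expand $e^{k\Phi}$, $b$, $f(x)$ and the Jacobian of $dv_\Lambda$ in powers of $k^{-1/2}$. Each coefficient is a polynomial in $(u,y)$ times the Gaussian. Integrating in $u$ over $\mathbb R^n$ costs only $O(k^{-\infty})$ outside $|u|\le\delta\sqrt k$, and we bound the remainders uniformly for bounded $y$. The Gaussian integral of the leading term gives $f(p)$. Along the way we check that the constants match: the normalization $(\pi/2k)^{n/2}$ is exactly what this requires, and the $y$-dependence of the leading term cancels. That cancellation comes from $|e^{k\psi}|$ against $\zeta$ in the exact Gaussian model $\mathbb C^n\supset\mathbb R^n$, where $R_kR_k^*$ is computed explicitly. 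The coefficients $c_{p,j}(y)$ come out polynomial in $y$ and smooth in $p$.

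\emph{Step 4: parity.} Substituting $u\mapsto -u$, the $k^{-j/2}$ term with $j$ odd is at $y=0$ an integral of an odd polynomial in $u$ against an even Gaussian. Hence $c_{p,2j+1}(0)=0$.

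The main obstacle is Step 2: obtaining the precise Taylor expansion of the phase. This means controlling the almost-analytic extension together with the extension of $\zeta$, whose $\overline\partial$ is flat on $\Lambda$. It also requires verifying the negative definiteness in $u$ uniformly, so that the stationary phase expansion is legitimate for complex-valued phases.
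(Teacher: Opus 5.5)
Your proposal follows essentially the same route as the proof the paper recalls from Tiba's earlier work. Both use the Berman--Berndtsson--Sj\"ostrand parametrix, with the phase $\psi$ taken relative to the almost holomorphic extension of $\zeta$. Both evaluate at $\Phi(0,y/\sqrt{k})$, rescale $x\mapsto x/\sqrt{k}$ on $\Lambda$, and integrate Gaussians via $\int x^I e^{-|x|^2/2+\sqrt{-1}x\cdot y}\,dx=(2\pi)^{n/2}(-\sqrt{-1})^{|I|}\partial_y^I e^{-|y|^2/2}$. Both finish with the same parity argument: the coefficient of $k^{-j/2}$ consists of monomials of total degree $\equiv j \bmod 2$ in $(y,x)$.

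The only technical difference is how you localize.
\begin{itemize}
\item The paper cuts off at $|x|\le C_N\log k/\sqrt{k}$, using the Ma--Marinescu bound $|K_k|\lesssim k^n e^{-c\sqrt{k}\,d}$. After rescaling, the cubic remainder is then at most $(C_N\log k)^3/\sqrt{k}\to 0$, so the Taylor expansion of $e^{k\psi}$ is trivially justified.
\item You cut off at a fixed $\delta$. This needs only the classical $O(k^{-\infty})$ decay at fixed distance, but you must then carry out the step you flagged. Namely, on $|u|\le\delta\sqrt{k}$ you have $|u|^3/\sqrt{k}\le\delta|u|^2$, hence $\operatorname{Re}k\Phi\le-\frac14|u|^2+C$ for small $\delta$ and bounded $y$. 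Combined with $|e^a-\sum_{j<M}a^j/j!|\le |a|^M e^{\max(0,\operatorname{Re}a)}/M!$, this controls the remainders. That works.
\end{itemize}

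There are three slips to fix.

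First, the normalization constant. Under the paper's conventions ($\omega=\frac{\sqrt{-1}}{2}\partial\bar\partial\phi$, volume $\omega^n/n!$), the parametrix has leading factor $(k/\pi)^n$, not $(2k/\pi)^n$. With your constant and $b_0=1$, the leading term comes out as $2^n f(p)$. With the correct constant you get $(\pi/2k)^{n/2}(k/\pi)^n k^{-n/2}(2\pi)^{n/2}=1$. So your claim that ``the constants match'' has to be actually checked.

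Second, the constants you leave unspecified in the quadratic part of the phase are not cosmetic. The $y$-dependence cancels, and the $c_{p,j}$ are polynomials, precisely because the quadratic part is exactly $\frac12|y|^2-\frac12|u|^2+\sqrt{-1}\,u\cdot y$. This follows from $\psi(z,\bar w)=\frac12(-z^2-\bar w^2+2z\bar w)+O(|z|^3+|w|^3)$, which is forced by $\varphi=2|\mathrm{Im}\,z|^2+O(|z|^3)$ and almost holomorphy. In adapted coordinates one has $z(\Phi(0,y))=\sqrt{-1}\,y+O(|y|^2)$ and $w(\Phi(x,0))=x+O(|x|^N)$.

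Third, the parametrization of $\Lambda$. Points of $\Lambda$ must be parametrized by Riemannian normal coordinates of $(\Lambda,g_\Lambda)$. The exponential map $\exp_p$ of $M$ applied to $\sum u_jX_j/\sqrt{k}$ leaves $\Lambda$ at second order unless $B=0$.
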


Our main result gives a geometric formula for the first nontrivial even coefficient $c_{p,2}(0)$.
In particular, $c_{p,2}(0)$ can be expressed entirely in terms of geometric invariants associated with $M$, $\Lambda$, and the function $f$.
\begin{theorem}\label{theorem:2}
\begin{align*}
c_{p,2}(0)
=
&\left(\frac{3}{32} \mathrm{Scal}_M(p) -\frac{1}{8}\mathrm{Scal}_{\Lambda}(p)+ \frac{1}{24} |B(p)|^2 - \frac{1}{8}|H(p)|^2 \right) f(p) - \frac{1}{2} \Delta_{\Lambda} f(p) \\
& \quad- \sqrt{-1} \left(\frac{1}{4} \mathrm{div}_{\Lambda} JH(p) f(p) + \frac{1}{2} JH f (p) \right).  \\
\end{align*}
Here, $\mathrm{Scal}_{M}$ and $\mathrm{Scal}_{\Lambda}$ denote the scalar
curvatures of $M$ and $\Lambda$, respectively, $B$ denotes the second
fundamental form of $\Lambda$ in $M$, and $H$ denotes its mean curvature
vector field. 
Moreover, $\operatorname{div}_{\Lambda}$ and
$\Delta_{\Lambda}$ denote the divergence and the Laplace operator on
$\Lambda$
with respect to the induced Riemannian metric on $\Lambda$, 
respectively.  
\end{theorem}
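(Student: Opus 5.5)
Since $c_{p,2}(0)$ is the coefficient of $k^{-1}$ at $y=0$, it suffices to compute $s_{f,k}(p)/\zeta^k(p)$ for $p\in\Lambda$. That is, we need the $k^{-1}$ term in
\[
\left(\frac{\pi}{2k}\right)^{n/2}\int_\Lambda \frac{K_k(p,x)\zeta^k(x)}{\zeta^k(p)} f(x)\,dv_\Lambda(x).
\]
I will evaluate this by stationary phase on $\Lambda$, localized near $p$.

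\textbf{Step 1: kernel expansion.} Insert the near-diagonal expansion of the Bergman kernel in the form of Berman--Berndtsson--Sj\"ostrand:
\[
K_k(z,w)=\left(\frac{2k}{\pi}\right)^n e^{k\psi(z,w)}\bigl(b_0+k^{-1}b_1+\cdots\bigr),
\]
with $b_0=1$ on the diagonal and $b_1=\tfrac18\mathrm{Scal}_M$ on the diagonal (up to the normalization conventions for $F^\nabla=-2\sqrt{-1}\omega$). Here $\psi$ is the almost-analytic extension of the K\"ahler potential, written relative to a local holomorphic frame. Work in K\"ahler normal coordinates centred at $p$, chosen so that $T_p\Lambda=\mathbb{R}^n\subset\mathbb{C}^n$. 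Then parametrize $\Lambda$ near $p$ by geodesic normal coordinates $t\in\mathbb{R}^n$ of $\Lambda$:
\[
x(t)=t+\tfrac12 B(t,t)+\tfrac16(\ldots)t^3+O(t^4).
\]
In the phase, the normal components of $B$ are of the form $J(\cdot)$.

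\textbf{Step 2: phase and amplitude.} The phase $k\Phi(t)$ is the log of $K_k(p,x)\zeta^k(x)/\zeta^k(p)$, measured with $h$. Since $\zeta$ is parallel along $\Lambda$ and the Bohr--Sommerfeld condition holds, $\Phi$ has a nondegenerate critical point at $t=0$ with $\mathrm{Re}\,\Phi=-|t|^2/2+O(t^3)$ in suitable normalization. The imaginary part of $\Phi$ comes from the holonomy-type term $\mathrm{Im}\,\psi$, which is computed through the normal displacement $B(t,t)$.

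Expand the following to the needed orders:
\begin{itemize}
\item the phase $\Phi(t)$ up to $O(t^4)$, using the K\"ahler potential in normal coordinates, $|z|^2-\tfrac14 R_{i\bar j k\bar l}z_i\bar z_j z_k\bar z_l+\ldots$, together with the embedding up to cubic order;
\item the volume density $dv_\Lambda=(1-\tfrac16\mathrm{Ric}_\Lambda(t,t)+\ldots)\,dt$;
\item $f(x(t))$ to second order.
\end{itemize}
Then apply Laplace's method: $(\pi/2k)^{n/2}(2k/\pi)^n\int e^{-k|t|^2}\cdots$. The normalization is arranged so that the leading term is $f(p)$. The $k^{-1}$ coefficient collects the following contributions:
\begin{enumerate}
\item $b_1(p)$;
\item $\tfrac14\Delta$ of the amplitude, which produces $-\tfrac12\Delta_\Lambda f$ after accounting for signs and the $\tfrac16\mathrm{Ric}_\Lambda$ volume term, giving a $\mathrm{Scal}_\Lambda$ piece;
\item Gaussian moments of the quartic part of $\Phi$;
\item squares of the cubic part of $\Phi$, at sixth moments;
\item cross terms of the cubic phase with $\nabla f$ and with the linear part of the density.
\end{enumerate}

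\textbf{Step 3: identification.} The cubic phase is essentially $\sqrt{-1}\,\langle B(t,t),Jt\rangle$-type, i.e.\ purely imaginary and cubic. Its square yields $|B|^2$ and $|H|^2$ combinations via the Wick identities
\[
\sum B_{iij}B_{kkj}=|H|^2,\qquad \sum B_{ijk}^2=|B|^2,
\]
using that $\langle B(X,Y),JZ\rangle$ is totally symmetric for Lagrangians. Its cross term with $\nabla f$ gives $-\tfrac{\sqrt{-1}}2 JHf$. The quartic phase contains the ambient curvature restricted to $T\Lambda$, and derivatives of $B$ whose trace gives $\mathrm{div}_\Lambda JH$. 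Convert the Gauss equation, $\mathrm{Scal}_\Lambda=\sum R^M_{ijji}+|H|^2-|B|^2$, and the K\"ahler identity relating $\sum_{ij}R(e_i,e_j,e_j,e_i)$ on a Lagrangian plane to $\mathrm{Scal}_M$ after averaging. This is needed because the quartic Gaussian moment of $R_{i\bar jk\bar l}t_it_jt_kt_l$ gives a full contraction only in combination with the $\mathrm{Scal}_M$ from $b_1$. This is where the coefficient $\tfrac3{32}$ versus $\tfrac18$ should emerge.

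\textbf{Main obstacle.} I expect the hardest part to be the bookkeeping in the quartic and cubic terms of the phase, in particular correctly extending $\psi$ off-diagonal to $(p,x(t))$ with $x(t)$ non-real. The normal displacement $\tfrac12B(t,t)$ enters $\mathrm{Im}\,\psi$ at cubic order, and its derivative enters at quartic order. I would first verify the computation on model cases, e.g.\ $\mathbb{RP}^n\subset\mathbb{CP}^n$ (totally geodesic, so $B=0$) and a circle in $\mathbb{CP}^1$ with $H\neq0$, to pin down the coefficients.
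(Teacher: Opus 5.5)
Your architecture matches the paper's. At $y=0$ you apply Laplace's method on $\Lambda$ in Riemannian normal coordinates, expand the phase to quartic order and $f\,dv_\Lambda$ to quadratic order, take Gaussian moments up to order six, and invoke the Gauss equation. Writing $\Lambda$ as $x(t)=t+\frac12B(t,t)+\cdots$ in K\"ahler normal coordinates, instead of using the paper's adapted holomorphic coordinates (in which $\Lambda$ is $\mathbb{R}^n$ to high order), only changes the bookkeeping.

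The gap is that the theorem \emph{is} the values of the coefficients, and you never determine the cubic and quartic Taylor coefficients of the phase. These are exactly what Lemmas~\ref{lemma:4} and~\ref{lemma:5} supply:
\begin{align*}
R_3(0,x)&=\frac{\sqrt{-1}}{6}\sum_{i,j,k}h_{ijk}x_ix_jx_k,\\
R_4(0,x)&=\frac{1}{24}\sum_{i,j,k,\ell}\Bigl(\sum_m h_{ikm}h_{j\ell m}-2R^M_{i\bar jk\bar\ell}+2\sqrt{-1}\,E_ih_{jk\ell}\Bigr)x_ix_jx_kx_\ell .
\end{align*}
Your description of the quartic phase (``ambient curvature and derivatives of $B$'') also misses the real term $\frac1{24}|B(x,x)|^2=\frac1{24}\sum h_{ikm}h_{j\ell m}x_ix_jx_kx_\ell$. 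Its Gaussian moment contributes $\frac1{24}(2|B|^2+|H|^2)f(p)$. If you drop it and get everything else right, you end up with $-\frac1{24}|B|^2-\frac16|H|^2$ instead of $\frac1{24}|B|^2-\frac18|H|^2$.

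Here is how the term arises in your coordinates. Choose the frame so that $\phi=|z|^2-\frac12\sum R^M_{i\bar jk\bar\ell}z_i\bar z_jz_k\bar z_\ell+\cdots$ (paper's normalization of $R^M$) has no purely holomorphic or antiholomorphic Taylor terms. Then $\psi_0(0,\bar w)$ vanishes to high order, so the off-diagonal extension you worried about is harmless, and the real part of the phase at $y=0$ is $-\frac12\phi(x(t))$.
\begin{itemize}
\item The normal displacement contributes $\frac14|B(t,t)|^2$ to $|x(t)|^2$.
\item Let $x_3$ be the cubic part of $x(t)$ and $R^M(t,t,t,t)=\sum R^M_{i\bar jk\bar\ell}t_it_jt_kt_\ell$. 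The constant-speed condition for the $\Lambda$-geodesic $s\mapsto x(st)$ forces $2\,\mathrm{Re}\,(t\cdot\overline{x_3(t)})=-\frac13|B(t,t)|^2+\frac23R^M(t,t,t,t)$.
\end{itemize}
Together these give $\frac1{24}|B(t,t)|^2-\frac1{12}R^M(t,t,t,t)=\mathrm{Re}\,R_4(0,t)$. So even the curvature coefficient is not the one you would read off from the potential alone. In the paper, the $B\otimes B$ term comes from $\sum_m\partial_ig_{k\bar m}\,\overline{\partial}_jg_{m\bar\ell}$ in the curvature formula, via $\partial_jg_{k\bar\ell}(p)=\sqrt{-1}h_{jk\ell}$ (Lemma~\ref{lemma:2}).

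Calibrating on model cases cannot replace this computation. It presupposes the universal form, which only the computation provides. Your two examples cannot even separate the $|B|^2$ and $|H|^2$ coefficients: $B=H=0$ for $\mathbb{RP}^n$, while for curves $|B|^2=|H|^2$ and $\mathrm{Scal}_\Lambda=0$.

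Several smaller points also need fixing.
\begin{itemize}
\item \emph{Curvature term.} Its quartic moment is $\sum_{i,j}(R^M_{i\bar jj\bar i}+R^M_{i\bar ij\bar j}+R^M_{i\bar ji\bar j})=\frac14\mathrm{Scal}_M+\sum_{i,j}R^M_{i\bar ji\bar j}$. The last, non-K\"ahler trace is converted by an exact pointwise identity, with no averaging and no interaction with $b_1$: for a Lagrangian orthonormal frame, $\sum_{i,j}\langle R^M(E_i,E_j)E_j,E_i\rangle=\frac14\mathrm{Scal}_M-2\sum_{i,j}R^M_{i\bar ji\bar j}$, combined with the Gauss equation (Lemma~\ref{lemma:3}). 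This is how the curvature term also feeds $\mathrm{Scal}_\Lambda$, $|B|^2$ and $|H|^2$. The term $b_1$ enters only through $b_1(p)=\frac18\mathrm{Scal}_M(p)$.
\item \emph{Leading amplitude.} You need $b_0\equiv1$ near the diagonal, not merely on it; this holds for the volume form $\omega^n/n!$. Otherwise the first- and second-order Taylor terms of $b_0(p,x(t))$ would contribute at order $k^{-1}$.
\item \emph{Constants.} Relative to $\omega^n/n!$ the kernel prefactor is $(k/\pi)^n$. As your own $\mathrm{Re}\,\Phi=-|t|^2/2$ says, the Gaussian is $e^{-k|t|^2/2}$, with second moments $\delta_{ij}/k$. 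This is what makes the leading term $f(p)$ and yields $-\frac12\Delta_\Lambda f$ directly. Your $(2k/\pi)^n$ with $e^{-k|t|^2}$ gives leading term $2^{n/2}f(p)$ and the wrong factor on $\Delta_\Lambda f$.
\item \emph{Odd cross terms.} $dv_\Lambda$ has no linear term in normal coordinates. So the only odd cross term is $R_3(0,x)\sum_jE_jf(p)x_j$, which gives $-\frac{\sqrt{-1}}{2}JHf(p)$.
\end{itemize}
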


\begin{corollary}\label{corollary:1}
If $f\in C^\infty(\Lambda)$ is real-valued, 
then
\begin{align*}
& \left(\frac{2k}{\pi}\right)^{n/2}
\|s_{f,k}\|^2_{h^k} \\
= & 
\int_\Lambda f^2\,dv_\Lambda 
+
\frac{1}{k}
\int_\Lambda
\left[
\left(
\frac{3}{32}\mathrm{Scal}_M
-\frac18\mathrm{Scal}_\Lambda
+\frac1{24}|B|^2
-\frac18|H|^2
\right)f^2
- \frac{1}{2} |\nabla f|^2 
\right]dv_\Lambda
+
O\left(\frac{1}{k^2}\right).
\end{align*}
as $k\to\infty$.
\end{corollary}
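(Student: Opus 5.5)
The statement to prove is Corollary~\ref{corollary:1}; I will derive it from Theorem~\ref{theorem:2}.

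The starting point is the operator form of the norm. Since $R_k$ is the restriction operator, we have
\[
\|s_{f,k}\|_{h^k}^2=\left(\tfrac{\pi}{2k}\right)^{n}\langle R_k^*f,R_k^*f\rangle=\left(\tfrac{\pi}{2k}\right)^{n}\langle R_kR_k^*f,f\rangle_{L^2(\Lambda)} .
\]
Multiplying by $\left(\frac{2k}{\pi}\right)^{n/2}$ gives
\[
\left(\tfrac{2k}{\pi}\right)^{n/2}\|s_{f,k}\|^2=\int_\Lambda \frac{s_{f,k}}{\zeta^k}(x)\,\overline{f(x)}\,dv_\Lambda(x).
\]
Here I use that $R_k s_{f,k}=s_{f,k}|_\Lambda/\zeta^k$, and that $f$ is real, so $\overline{f}=f$.

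Next I evaluate Theorem~\ref{theorem:1} at $y=0$, i.e. at the point $p\in\Lambda$ itself. The odd coefficients satisfy $c_{p,1}(0)=c_{p,3}(0)=0$. Taking $N=3$ gives
\[
\frac{s_{f,k}}{\zeta^k}(p)=f(p)+\frac{c_{p,2}(0)}{k}+o(k^{-3/2}).
\]
The remainder must be uniform in $p$; this follows from compactness of $\Lambda$ and the smooth dependence on $p$ asserted in \cite{Tib2}. To get the stated $O(k^{-2})$ rather than $o(k^{-3/2})$, I take $N=4$, which gives $O(k^{-2})$ uniformly, again using $c_{p,3}(0)=0$. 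Integrating against $f\,dv_\Lambda$ then yields
\[
\int_\Lambda f^2\,dv_\Lambda+\frac1k\int_\Lambda c_{p,2}(0)f(p)\,dv_\Lambda(p)+O(k^{-2}).
\]

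Now I substitute Theorem~\ref{theorem:2} and simplify term by term.

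The curvature terms multiplying $f(p)$ contribute exactly the stated integrand times $f^2$.

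For the Laplacian term, integration by parts on the closed manifold $\Lambda$ gives
\[
-\tfrac12\int_\Lambda f\,\Delta_\Lambda f\,dv_\Lambda=-\tfrac12\int_\Lambda|\nabla f|^2\,dv_\Lambda ,
\]
using the sign convention $\Delta_\Lambda=\mathrm{div}\,\mathrm{grad}$. This is consistent with the sign appearing in the corollary.

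For the imaginary term, I need to check that $JH$ is tangent to $\Lambda$. Since $H$ is normal and $\Lambda$ is Lagrangian, $J$ maps $N\Lambda$ to $T\Lambda$, so $JH$ is indeed a tangent vector field. The imaginary contribution is therefore
\[
-\sqrt{-1}\int_\Lambda\left(\tfrac14(\mathrm{div}\,JH)f^2+\tfrac12 f\,(JHf)\right)dv_\Lambda .
\]
Since $f\,(JHf)=\tfrac12 JH(f^2)$, the divergence theorem gives
\[
\int_\Lambda \tfrac12 f\,(JHf)\,dv_\Lambda=\tfrac14\int_\Lambda JH(f^2)\,dv_\Lambda=-\tfrac14\int_\Lambda(\mathrm{div}\,JH)f^2\,dv_\Lambda ,
\]
so the two terms cancel exactly. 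This cancellation is consistent with the left-hand side being real.

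The only delicate point is the uniformity of the remainder in $p$, which has to come from the proof of Theorem~\ref{theorem:1}. Everything else is bookkeeping.
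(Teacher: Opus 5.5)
This is essentially the paper's proof: you write $\left(\frac{2k}{\pi}\right)^{n/2}\|s_{f,k}\|_{h^k}^2=\int_\Lambda \frac{s_{f,k}}{\zeta^k}\,f\,dv_\Lambda$, insert Theorem~\ref{theorem:2} at $y=0$ (uniformly in $p$), integrate the Laplacian term by parts, and remove the imaginary part via $\int_\Lambda \mathrm{div}_\Lambda(f^2JH)\,dv_\Lambda=0$. One sign slip to fix: the identity $-\frac12\int_\Lambda f\,\Delta_\Lambda f\,dv_\Lambda=-\frac12\int_\Lambda|\nabla f|^2\,dv_\Lambda$ holds for the nonnegative Laplacian $\Delta_\Lambda=-\mathrm{div}\,\mathrm{grad}$, which is the convention the paper actually uses (its proof of Theorem~\ref{theorem:2} rewrites $\frac12\sum_i E_iE_if(p)$ as $-\frac12\Delta_\Lambda f(p)$), whereas with your stated convention $\Delta_\Lambda=\mathrm{div}\,\mathrm{grad}$ the same identity would give $+\frac12\int_\Lambda|\nabla f|^2\,dv_\Lambda$.
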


\medskip 

{\it Acknowledgment.}
This work was supported by the 
Grant-in-Aid for Scientific Research (KAKENHI No.\! 26K06823).  
The author used ChatGPT for language editing and for checking some calculations.

\section{Geometric preliminaries}\label{section:2}
In this section, we collect several geometric lemmas that will be used in the proof of the main theorem.
Throughout, $\langle\cdot,\cdot\rangle$ denotes the Riemannian metric on $M$, extended complex bilinearly to the complexified tangent bundle when necessary.
Fix $p\in\Lambda$, and let
$x=(x_1,\ldots,x_n)$ be Riemannian normal coordinates on $\Lambda$
centered at $p$. 
Let $X_1,\ldots,X_n$ be a local orthonormal frame of $T\Lambda$ that coincides with the coordinate frame $\partial/\partial x_1,\ldots,\partial/\partial x_n$ at $p$.
Since $\Lambda$ is Lagrangian,
$JX_1,\ldots,JX_n$ form a local orthonormal frame of the normal bundle.
After shrinking the neighborhood if necessary, the map
\[
\Phi(x,y)
=
\exp_x\left(\sum_{j=1}^n y_j JX_j(x)\right)
\]
defines a diffeomorphism from a neighborhood of the origin in
$\mathbb{R}^{2n}$ onto a neighborhood of $p$ in $M$.

\begin{lemma}\label{lemma:1}
Fix $p\in\Lambda$ and a positive integer $N$. 
Then there exist a sufficiently small neighborhood $U$ of $p$ in $M$ and holomorphic coordinates
$
z=(z_1,\ldots,z_n)
$
centered at $p$ on $U$ such that
\[
z\circ\Phi(x,0)=x+O(|x|^N) 
\]
as $x\to 0$.
\end{lemma}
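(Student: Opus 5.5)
Since $\Lambda$ is a totally real submanifold of maximal dimension $n$, the natural strategy is to complexify the normal coordinate chart $x\mapsto \Phi(x,0)$ on $\Lambda$ up to order $N$. The plan is to start from any holomorphic coordinates $w=(w_1,\ldots,w_n)$ centered at $p$. We normalize them so that $dw(p)$ sends $X_j(p)$ to the $j$-th standard basis vector $e_j$. This is possible because $X_1(p),\ldots,X_n(p)$ are $\mathbb{C}$-linearly independent in $(T_pM,J)$, by the Lagrangian condition.

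With this normalization, the composite $g(x):=w\circ\Phi(x,0)$ is a smooth map from a neighborhood of $0\in\mathbb{R}^n$ into $\mathbb{C}^n$ with $g(0)=0$ and $dg(0)=\mathrm{id}$ (viewed as the real inclusion $\mathbb{R}^n\subset\mathbb{C}^n$). We take its Taylor polynomial of order $N$,
\[
g(x)=\sum_{1\le|\alpha|<N} a_\alpha x^\alpha+O(|x|^N),\qquad a_\alpha\in\mathbb{C}^n .
\]
Replacing the real variable $x$ by a complex variable $\xi\in\mathbb{C}^n$ gives a holomorphic polynomial map
\[
P(\xi)=\sum_{1\le|\alpha|<N} a_\alpha\xi^\alpha .
\]
It satisfies $P(0)=0$ and $dP(0)=\mathrm{id}$. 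By the holomorphic inverse function theorem, $P$ is therefore a biholomorphism between small neighborhoods of $0$, with holomorphic inverse $Q=P^{-1}$.

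We then define $z:=Q\circ w$ on a small neighborhood $U$ of $p$; these are holomorphic coordinates centered at $p$. It remains to check that
\[
z\circ\Phi(x,0)=Q(g(x))=Q\bigl(P(x)+O(|x|^N)\bigr).
\]
Since $Q$ is smooth and $dQ$ is bounded near $0$, the mean value inequality gives
\[
Q\bigl(P(x)+O(|x|^N)\bigr)=Q(P(x))+O(|x|^N)=x+O(|x|^N),
\]
which is the claim.

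The only point needing care is that restricting the holomorphic polynomial $P$ to real $\xi=x$ reproduces exactly the real Taylor polynomial of $g$. This holds by construction, because the monomials $x^\alpha$ are real-analytic and complexify to $\xi^\alpha$. The remaining step, shrinking $U$ so that $w(U)$ lies in the domain of $Q$ and all the $O$-estimates hold uniformly, is routine.
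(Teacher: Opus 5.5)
Your argument is correct, but it is organized differently from the paper's. The paper starts from the same linear normalization: holomorphic coordinates with $\partial/\partial u_j|_p=\partial/\partial x_j|_p$, which gives $z\circ\Phi(x,0)=x+O(|x|^2)$. It then proceeds by induction on the order. If $z\circ\Phi(x,0)=x+\sum_{|I|=m}c_Ix^I+O(|x|^{m+1})$, it replaces $z$ by $z'=z-\sum_{|I|=m}c_Iz^I$. This removes the degree-$m$ error, because $(z\circ\Phi(x,0))^I=x^I+O(|x|^{2m-1})$ and $2m-1\geq m+1$ for $m\geq 2$.

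You instead remove all errors up to order $N-1$ at once. You complexify the full Taylor polynomial of $g=w\circ\Phi(\cdot,0)$ and compose with the holomorphic inverse $Q=P^{-1}$.

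What each route buys:
\begin{itemize}
\item The paper's induction produces the final coordinates as an explicit polynomial in the initial ones, namely a composition of polynomial maps tangent to the identity. Each step needs only the one-line estimate above. It does tacitly use the inverse function theorem to know that each $z'$ is again a chart, since $dz'(p)=dz(p)$.
\item Your version makes the underlying reason transparent. The only obstruction is that $x\mapsto\Phi(x,0)$ is not the restriction of a holomorphic map, and complexifying its $(N-1)$-jet fixes this.
\item The cost of your version is that $z=Q\circ w$ involves a convergent power series rather than a polynomial. It also needs some bookkeeping, which you rightly call routine: the mean value inequality on a convex ball inside the domain of $Q$, and the fact that $Q(P(x))=x$ for small real $x$.
\item Your version adapts verbatim to give exact equality $z\circ\Phi(x,0)=x$ whenever $x\mapsto\Phi(x,0)$ is real-analytic, by complexifying $g$ itself.
\end{itemize}
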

\begin{proof}
We prove the lemma by induction.
Since $J \partial/ \partial x_j = \partial/ \partial y_j$ at $p$, 
we may choose holomorphic coordinates
$z=(z_1,\ldots,z_n)$, with
$z_j=u_j+\sqrt{-1}v_j$, centered at $p$ such that
\[
\left.\frac{\partial}{\partial u_j}\right|_p
=
\left.\frac{\partial}{\partial x_j}\right|_p,
\qquad
\left.\frac{\partial}{\partial v_j}\right|_p
=
\left.\frac{\partial}{\partial y_j}\right|_p
\qquad 
(j = 1, \ldots, n).  
\]
Then we have 
$
z\circ\Phi(x,0)=x+O(|x|^2).
$

Suppose inductively that, for some $m\geq 2$, we have chosen
holomorphic coordinates satisfying
\[
z\circ\Phi(x,0)
=
x+\sum_{|I|=m}c_Ix^I+O(|x|^{m+1}),
\]
where $I=(i_1,\ldots,i_n)$ is a multi-index,
$|I|=i_1+\cdots+i_n$,
$x^I=x_1^{i_1}\cdots x_n^{i_n}$, and
$c_I\in\mathbb{C}^n$.
Define new holomorphic coordinates by
$
z'
=
z-\sum_{|I|=m}c_Iz^I.
$
Then, 
\[
z'\circ\Phi(x,0)
=
x+O(|x|^{m+1}).
\]
This completes the induction.
\end{proof}
We call such holomorphic coordinates adapted holomorphic coordinates
of order $N$ with respect to the Riemannian normal coordinates $x$.

Let $\nabla^M$ and $\nabla^\Lambda$ denote the Levi--Civita connections of
$M$ and $\Lambda$, respectively, where $\Lambda$ is equipped with the
induced Riemannian metric.
For vector fields $X, Y, Z$ on $M$, 
we define 
\[
R^M(X,Y)Z
=
\nabla^M_X\nabla^M_Y Z
-
\nabla^M_Y\nabla^M_X Z
-
\nabla^M_{[X,Y]}Z.
\]
Let $\{\Xi_i\}_{i=1}^{2n}$ be a local orthonormal frame of $TM$.  
The scalar curvature of $M$ is defined by
$
\mathrm{Scal}_M
=
\sum_{i, j =1}^{2n}
\langle R^M(\Xi_i,\Xi_j)\Xi_j,\Xi_i\rangle.  
$
We define $R^{\Lambda}$, $\mathrm{Scal}_{\Lambda}$ in the same way using $\nabla^{\Lambda}$.

We take the Riemannian normal coordinates $x = (x_1, \ldots, x_n)$ on $\Lambda$ centered at $p$ and 
the adapted holomorphic coordinates
$z=(z_1,\ldots,z_n)$ of order $N$ with respect to $x$. 
We assume that $N$ is sufficiently large.  
In these coordinates, we write
$
E_{j} = \frac{\partial}{\partial x_j}
$, 
$
\partial_j=\frac{\partial}{\partial z_j},
\overline{\partial}_{j}=\frac{\partial}{\partial \bar{z}_j}.  
$
For a smooth function $u$, we use the notation
$u_j=\partial_j u$, $u_{\bar j}=\overline{\partial}_j u$,
and similarly for higher derivatives.
We also write the K\"ahler form as $\omega = \frac{\sqrt{-1}}{2}\sum_{i, j = 1}^n g_{i\bar{j}} dz_i \wedge d\bar{z}_j$.   
We denote 
\[
R^{\Lambda}_{ijk\ell}
=
\left\langle
R^{\Lambda}\left(
E_i, E_j\right)
E_k, E_{\ell}
\right\rangle,   
\quad 
R^{M}_{i\bar{j}k\bar{\ell}}
=
\left\langle
R^M\left(
\partial_{i}, \overline{\partial}_j
\right)
\partial_k, \overline{\partial}_{\ell}
\right\rangle.   
\]
At a point where
$\langle \partial_i, \overline{\partial}_j \rangle
=\delta_{ij}/2$, 
we have
$
\sum_{i,j=1}^n R^M_{i\bar{i}j\bar{j}} = \sum_{i, j=1}^n R^M_{i\bar{j}j \bar{i}}
=
\frac{1}{8}\mathrm{Scal}_M.
$

The second fundamental form of $\Lambda$ in $M$ is defined by 
\[ 
B(X,Y) = \nabla^M_XY-\nabla^\Lambda_XY, \qquad X,Y\in T\Lambda. 
\] 
The mean curvature vector field is defined by $H=\operatorname{tr}_{g_\Lambda}B$, where $g_{\Lambda}$ is the induced Riemannian metric on $\Lambda$. 
We define
\[
h_{ijk}
=
\left\langle
\nabla^M_{E_i}E_j,JE_k
\right\rangle
=
\left\langle
B(E_i,E_j),JE_k
\right\rangle.
\]
The tensor $h=(h_{ijk})$ is the cubic form of the Lagrangian
submanifold $\Lambda$.
Since $B$ is symmetric, we have $h_{ijk}=h_{jik}$.
Moreover, since $\Lambda$ is Lagrangian,
$\langle E_j,JE_k\rangle=0$. 
Differentiating this identity in the
$E_i$-direction and using $\nabla^M J=0$, we obtain
\[
0
=\langle \nabla^M_{E_i}E_j,JE_k\rangle 
- \langle JE_j, \nabla^M_{E_i}E_k\rangle
=
h_{ijk}-h_{ikj}.
\]
Thus, $h_{ijk}$ is symmetric in all three indices.
Since $x$ is a system of Riemannian normal coordinates, we have
\[
H(x)=\sum_{i,j=1}^n h_{iij}(x)JE_j(x)+O(|x|^2)
\]
near $p$.

\begin{lemma}\label{lemma:2}
\[
\partial_j g_{k \bar{\ell}}|_{\Lambda}
= \langle \nabla^{\Lambda}_{E_j}E_{k}, E_\ell \rangle + \sqrt{-1} h_{jk \ell} + O(|x|^{N-2}).  
\]
In particular, 
we have 
\[
\partial_j g_{k \bar{\ell}} (p) = \sqrt{-1} h_{jk \ell}(p).  
\]
\end{lemma}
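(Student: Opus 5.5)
The plan is to write $\partial_j g_{k\bar\ell}$ as a covariant derivative of the metric pairing, use the Kähler property to trade the holomorphic derivative $\partial_j$ for a real derivative tangent to $\Lambda$, and then use the adapted coordinates of Lemma~\ref{lemma:1} to replace the coordinate fields by the frame $E_j$, $JE_k$.

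\emph{Step 1: metric conventions and the Kähler identities.} With $\omega=\frac{\sqrt{-1}}{2}\sum g_{i\bar j}\,dz_i\wedge d\bar z_j$, one checks in the flat case that
\[
\langle \partial_i,\overline{\partial}_j\rangle=\tfrac12 g_{i\bar j}.
\]
Since $\nabla^M$ is metric, and complex bilinearly extended,
\[
\partial_j g_{k\bar\ell}=2\langle\nabla^M_{\partial_j}\partial_k,\overline{\partial}_\ell\rangle+2\langle\partial_k,\nabla^M_{\partial_j}\overline{\partial}_\ell\rangle .
\]
On a Kähler manifold the mixed Christoffel symbols vanish, so $\nabla^M_{\overline{\partial}_j}\partial_k=0$. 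Its conjugate $\nabla^M_{\partial_j}\overline{\partial}_\ell$ also vanishes, which kills the second term. Writing $z_j=u_j+\sqrt{-1}v_j$, we have $\partial/\partial u_j=\partial_j+\overline{\partial}_j$. Hence
\[
\nabla^M_{\partial_j}\partial_k=\nabla^M_{\partial/\partial u_j}\partial_k .
\]
This is the key point: only a derivative in the direction $\partial/\partial u_j$ appears.

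\emph{Step 2: comparison with the frame along $\Lambda$.} By Lemma~\ref{lemma:1}, $z\circ\Phi(x,0)=x+O(|x|^N)$. Differentiating, along $\Lambda$ we get
\[
E_k=\partial/\partial u_k+W_k,
\]
where $W_k=O(|x|^{N-1})$ and its derivatives along $\Lambda$ are $O(|x|^{N-2})$.

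Since $\partial_k$ is holomorphic, $\partial_k=\frac12(1-\sqrt{-1}J)\,\partial/\partial u_k$. Because $\nabla^M J=0$, differentiating along the tangent direction $\partial/\partial u_j=E_j+O(|x|^{N-1})$ on $\Lambda$ gives
\[
\nabla^M_{\partial/\partial u_j}\partial_k=\tfrac12(1-\sqrt{-1}J)\nabla^M_{E_j}E_k+O(|x|^{N-2}).
\]
Similarly, $\overline{\partial}_\ell=\frac12(E_\ell+\sqrt{-1}JE_\ell)+O(|x|^{N-1})$ on $\Lambda$. Here one must check that all derivatives taken are along $\Lambda$, so that only the restriction of $z$ to $\Lambda$ matters; this is exactly what Step 1 guarantees.

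\emph{Step 3: algebra.} Expand $2\langle\frac12(1-\sqrt{-1}J)A,\frac12(E_\ell+\sqrt{-1}JE_\ell)\rangle$ with $A=\nabla^M_{E_j}E_k$. Use $\langle JA,B\rangle=-\langle A,JB\rangle$ and $\langle JA,JB\rangle=\langle A,B\rangle$. This yields
\[
\langle A,E_\ell\rangle+\sqrt{-1}\langle A,JE_\ell\rangle .
\]
The tangential component of $A$ is $\nabla^\Lambda_{E_j}E_k$, since $B$ is normal. The normal pairing is $h_{jk\ell}$ by definition. This gives the formula.

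At $p$, the Christoffel symbols of the normal coordinates $x$ vanish, and $E_k$ agrees with the coordinate frame there. Hence $\langle\nabla^\Lambda_{E_j}E_k,E_\ell\rangle(p)=0$, giving the second claim.

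The main obstacle is the bookkeeping in Step 2: controlling the error terms uniformly and justifying the replacement of $\partial/\partial u_j$ by $E_j$ inside the covariant derivative. This is where the loss of two orders, from $N$ to $N-2$, comes from.
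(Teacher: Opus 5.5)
Your proposal is correct and follows essentially the same route as the paper: write $\partial_j g_{k\bar\ell}=2\langle\nabla^M_{\partial_j}\partial_k,\overline{\partial}_\ell\rangle$, use the K\"ahler condition to replace $\partial_j$ by the real field $\partial_j+\overline{\partial}_j$, and then use the adapted coordinates to replace the coordinate fields along $\Lambda$ by $E_j$ and $\frac12(E_\ell+\sqrt{-1}JE_\ell)$, with error $O(|x|^{N-2})$. The only cosmetic difference is in how $\partial_k$ is handled: you keep $\partial_k=\frac12(1-\sqrt{-1}J)\,\partial/\partial u_k$ and pull $J$ through using $\nabla^M J=0$, whereas the paper replaces $\partial_k$ by $\partial_k+\overline{\partial}_k$ directly, because the extra $(0,1)$-type term pairs to zero with $\overline{\partial}_\ell$.
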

\begin{proof}
We have 
\begin{align*}
\partial_{j} g_{k \bar{\ell}}
= 2\partial_{j} \langle \partial_{k}, \overline{\partial}_\ell \rangle
= 2\langle \nabla^M_{\partial_{j}} \partial_{k}, \overline{\partial}_\ell\rangle 
= 2\langle \nabla^M_{\partial_j + \overline{\partial}_j} (\partial_{k} + \overline{\partial}_{k}), \overline{\partial}_\ell \rangle.  
\end{align*}  
Because 
$z(\Phi(x, 0)) = x + O(|x|^N)$, 
it follows that 
$
E_j = \partial_j|_{\Lambda} + \overline{\partial}_j|_{\Lambda} + O(|x|^{N-1}).  
$
Here we identify $T\Lambda$ with its image in $TM|_\Lambda$.
Then, 
$
2\overline{\partial}_j|_{\Lambda}
= E_j + \sqrt{-1}J E_j + O(|x|^{N-1}).
$
Hence 
\begin{align*}
\partial_j g_{k \bar{\ell}}|_{\Lambda}
= \langle \nabla^M_{E_j} E_{k}, E_{\ell} +\sqrt{-1} J E_\ell\rangle + O(|x|^{N-2})
= \langle \nabla^{\Lambda}_{E_j}E_{k}, E_\ell \rangle + \sqrt{-1} h_{jk \ell} + O(|x|^{N-2}).  
\end{align*}  
Since $x = (x_1, \ldots, x_n)$ is the Riemannian normal coordinates, $\nabla^{\Lambda}_{E_j}E_k(p) = 0$.  
This completes the proof.  
\end{proof}

We recall the following standard identity for Riemannian normal coordinates.
\begin{lemma}\label{lemma:2-1}
\[
E_{i} \langle \nabla_{E_j}^{\Lambda}E_k, E_{\ell} \rangle 
= \frac{1}{3} (R_{ijk\ell}^{\Lambda} + R_{ikj\ell}^{\Lambda})  
\]
at $p$.  
\end{lemma}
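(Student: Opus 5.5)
We use the Taylor expansion of the metric in Riemannian normal coordinates on $\Lambda$. Write $g^\Lambda_{k\ell}(x)=\langle E_k,E_\ell\rangle$ in the coordinates $x$. The classical expansion is
\[
g^\Lambda_{k\ell}(x)=\delta_{k\ell}-\frac13\sum_{i,j}R^\Lambda_{ik\ell j}(p)\,x_ix_j+O(|x|^3),
\]
with the sign fixed by the curvature convention adopted above, $R^\Lambda(X,Y)Z=\nabla_X\nabla_YZ-\nabla_Y\nabla_XZ-\nabla_{[X,Y]}Z$. Since $\nabla^\Lambda_{E_j}E_k(p)=0$, all the needed quantities are Christoffel symbols. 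Expand $\nabla^\Lambda_{E_j}E_k=\sum_m\Gamma^m_{jk}E_m$ and set $\Gamma_{jk\ell}=\langle\nabla^\Lambda_{E_j}E_k,E_\ell\rangle=\sum_m\Gamma^m_{jk}g^\Lambda_{m\ell}$. Because $\Gamma^m_{jk}(p)=0$, we get at $p$
\[
E_i\Gamma_{jk\ell}=\partial_i\Gamma^\ell_{jk}.
\]
We will compute this from $\Gamma_{jk\ell}=\frac12(\partial_jg^\Lambda_{k\ell}+\partial_kg^\Lambda_{j\ell}-\partial_\ell g^\Lambda_{jk})$. Differentiating once more and inserting the second-order Taylor coefficients gives
\[
\partial_i\Gamma_{jk\ell}(p)=-\frac16\Bigl(R_{jk\ell i}+R_{ik\ell j}+R_{kj\ell i}+R_{ij\ell k}-R_{jk\ell i}\cdots\Bigr),
\]
and the terms are then collected using the symmetries of $R^\Lambda$.

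The step most prone to error is the bookkeeping of indices and signs in this collection. For that reason I would not rely on the metric expansion alone, and would instead derive the result more invariantly using the standard facts about normal coordinates. First, the symmetrized identity $\Gamma^\ell_{jk}(x)x_jx_k\equiv0$ holds along radial geodesics. Differentiating it twice at $p$ gives
\[
\partial_i\Gamma^\ell_{jk}+\partial_j\Gamma^\ell_{ki}+\partial_k\Gamma^\ell_{ij}=0 \quad\text{at } p.
\]
Second, at $p$ the curvature is expressed through the Christoffel symbols as
\[
R(E_i,E_j)E_k=\nabla_{E_i}\nabla_{E_j}E_k-\nabla_{E_j}\nabla_{E_i}E_k,
\]
where $[E_i,E_j]=0$ because these are coordinate fields. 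Since $\Gamma(p)=0$, this yields $R_{ijk\ell}=\partial_i\Gamma^\ell_{jk}-\partial_j\Gamma^\ell_{ik}$ at $p$.

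Now write $A_{ijk}:=\partial_i\Gamma^\ell_{jk}$, suppressing $\ell$. This tensor is symmetric in $(j,k)$, its cyclic sum vanishes, and $A_{ijk}-A_{jik}=R_{ijk\ell}$. These three relations determine $A$ uniquely. Using the cyclic relation we have $A_{ijk}=-A_{jki}-A_{kij}$, and after applying the symmetry in the last two indices, $3A_{ijk}=(A_{ijk}-A_{jik})+(A_{ikj}-A_{kij})$. This equals $R_{ijk\ell}+R_{ikj\ell}$, which gives the stated formula
\[
E_i\langle\nabla^\Lambda_{E_j}E_k,E_\ell\rangle=\tfrac13\bigl(R^\Lambda_{ijk\ell}+R^\Lambda_{ikj\ell}\bigr)\quad\text{at } p.
\]
The only remaining care is to confirm the sign against the curvature convention fixed above, which the derivation of $R_{ijk\ell}$ at $p$ already does.
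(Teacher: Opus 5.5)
Your main argument is correct and is essentially the paper's proof. You derive the cyclic identity $\partial_i\Gamma^\ell_{jk}+\partial_j\Gamma^\ell_{ki}+\partial_k\Gamma^\ell_{ij}=0$ from radial geodesics, combine it with $R^\Lambda_{ijk\ell}=\partial_i\Gamma^\ell_{jk}-\partial_j\Gamma^\ell_{ik}$ at $p$, and add the two curvature identities. The abandoned metric-expansion paragraph, whose displayed formula is left incomplete, should be deleted. Also, the cyclic identity comes from the cubic Taylor coefficient of $\sum_{j,k}\Gamma^\ell_{jk}(x)x_jx_k\equiv 0$, which takes three differentiations, not two.
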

\begin{proof}
Write $ \nabla_{E_j}^{\Lambda} E_k = \sum_{\ell=1}^n\Gamma_{j k}^{\ell} E_{\ell}$.  
Since $\nabla^{\Lambda}$ is torsion--free, $\Gamma_{jk}^{\ell}= \Gamma_{kj}^{\ell}$.  
Since $x = (x_1, \ldots, x_n)$ is the Riemannian normal coordinates centered at $p$, 
for every $x$ sufficiently close to the origin, the curve
$
t\longmapsto tx
$
is a geodesic through $p$.
Hence 
$\sum_{j, k = 1}^n \Gamma_{jk}^{\ell} (tx) x_j x_k = 0$ for $\ell = 1, \ldots, n$.  
Then, by the Taylor expansion at the origin, we have 
\[
\Gamma_{jk}^{\ell}(0) = 0, \quad E_i \Gamma_{jk}^{\ell}(0) + E_j \Gamma_{ki}^{\ell}(0) + E_k \Gamma_{ij}^{\ell}(0) = 0.  
\]
Because 
\[
R_{ijk\ell}^{\Lambda} = \langle \nabla^{\Lambda}_{E_i} \nabla^{\Lambda}_{E_j} E_{k}, E_{\ell}\rangle- \langle \nabla^{\Lambda}_{E_j} \nabla^{\Lambda}_{E_i} E_{k}, E_{\ell}\rangle
\]
at $p$, 
we have 
\[
R^{\Lambda}_{ijk\ell} = E_i \Gamma_{jk}^{\ell} - E_j \Gamma_{ik}^{\ell},  \quad 
R^{\Lambda}_{ikj\ell} = E_i \Gamma_{k j}^{\ell} - E_{k} \Gamma_{ij}^{\ell} 
\]
at $p$.  
Adding these two identities proves the lemma, since 
$E_{i} \langle \nabla_{E_j}^{\Lambda}E_k, E_{\ell} \rangle = E_i \Gamma_{jk}^{\ell}$ at $p$.  
\end{proof}

\begin{lemma}\label{lemma:3}
\begin{align*}
\sum_{i, j=1}^nR^{M}_{i\bar{j}i\bar{j}}(p) 
& = \frac{1}{8} \mathrm{Scal}_{M}(p) - \frac{1}{2} \mathrm{Scal}_{\Lambda}(p)-\frac{1}{2}|B(p)|^2 + \frac{1}{2} |H(p)|^2 
\end{align*}
\end{lemma}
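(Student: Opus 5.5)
The plan is to reduce everything at $p$ to real curvature components in the orthonormal frame $E_1,\dots,E_n,JE_1,\dots,JE_n$, and then to use the Gauss equation and the definition of $\mathrm{Scal}_M$.

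\textbf{Step 1 (complex to real frame).} At $p$ the adapted holomorphic coordinates give $\partial_i=\frac12(E_i-\sqrt{-1}JE_i)$ and $\overline{\partial}_j=\frac12(E_j+\sqrt{-1}JE_j)$. Since $\nabla^M J=0$, each $R^M(X,Y)$ commutes with $J$. Writing $A=R^M(X,Y)E_i$, we therefore get for real $X,Y$
\[
\langle R^M(X,Y)(E_i-\sqrt{-1}JE_i),E_j+\sqrt{-1}JE_j\rangle
=\langle (1-\sqrt{-1}J)A,(1+\sqrt{-1}J)E_j\rangle
=2\langle A,E_j\rangle+2\sqrt{-1}\langle A,JE_j\rangle .
\]
Next expand the first pair $(\partial_i,\overline{\partial}_j)$ by complex bilinearity, and use pair symmetry together with $R^M(JX,JY)=R^M(X,Y)$. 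This expresses $R^M_{i\bar j i\bar j}(p)$ as $\frac14$ times a combination of
\[
\langle R^M(E_i,E_j)E_i,E_j\rangle,\qquad \langle R^M(E_i,JE_j)E_i,JE_j\rangle,
\]
plus imaginary terms of the form $\langle R^M(E_i,E_j)E_i,JE_j\rangle$ and similar mixed terms. I would then check that after summing over $i,j$ the imaginary terms cancel. The tools for this are the antisymmetries, the Bianchi identity, and the $J$-invariance, for instance by swapping $i\leftrightarrow j$. The expected outcome is
\[
\sum_{i,j}R^M_{i\bar j i\bar j}(p)=\frac12\sum_{i,j}\Bigl(\langle R^M(E_i,E_j)E_i,E_j\rangle-\langle R^M(E_i,JE_j)E_i,JE_j\rangle\Bigr),
\]
with the exact constant to be fixed in the computation.

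\textbf{Step 2 (express $\mathrm{Scal}_M$).} Take the frame $\Xi=\{E_i,JE_i\}$ in the definition of $\mathrm{Scal}_M$. Using $J$-invariance, the $JE$–$JE$ block equals the $E$–$E$ block, and the two mixed blocks agree. This gives
\[
\mathrm{Scal}_M=2\sum_{i,j}\langle R^M(E_i,E_j)E_j,E_i\rangle+2\sum_{i,j}\langle R^M(E_i,JE_j)JE_j,E_i\rangle .
\]
Hence the mixed sum can be eliminated in favour of $\mathrm{Scal}_M$ and the tangential sum $\sum_{i,j}\langle R^M(E_i,E_j)E_j,E_i\rangle$.

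\textbf{Step 3 (Gauss equation).} Apply the Gauss equation
\[
\langle R^M(X,Y)Y,X\rangle=\langle R^\Lambda(X,Y)Y,X\rangle-\langle B(X,X),B(Y,Y)\rangle+|B(X,Y)|^2
\]
and sum over $i,j$. This gives
\[
\sum_{i,j}\langle R^M(E_i,E_j)E_j,E_i\rangle=\mathrm{Scal}_\Lambda-|H|^2+|B|^2 ,
\]
where $|B|^2=\sum_{i,j,k}h_{ijk}^2$ and $|H|^2=\sum_k\bigl(\sum_i h_{iik}\bigr)^2$.

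\textbf{Step 4 (combine).} Substituting Steps 2 and 3 into Step 1 gives a combination of the form
\[
\tfrac18\mathrm{Scal}_M-\tfrac12\bigl(\mathrm{Scal}_\Lambda-|H|^2+|B|^2\bigr),
\]
which is the claimed formula. As a consistency check, compare with the already stated identity $\sum R^M_{i\bar i j\bar j}=\frac18\mathrm{Scal}_M$, which should appear in the same computation with the opposite sign on the mixed block.

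The main obstacle is Step 1. The sign conventions are the risk there: the paper's convention $\langle R(X,Y)Z,W\rangle$ with $\mathrm{Scal}=\sum\langle R(\Xi_i,\Xi_j)\Xi_j,\Xi_i\rangle$ means $\langle R(E_i,E_j)E_i,E_j\rangle=-\langle R(E_i,E_j)E_j,E_i\rangle$. The constants $\frac14$ versus $\frac12$ must also be right, and one must verify that the imaginary mixed terms really vanish after summation. I would calibrate the constants against the stated identity $\sum R^M_{i\bar i j\bar j}=\frac18\mathrm{Scal}_M$ at a point with $\langle\partial_i,\overline{\partial}_j\rangle=\delta_{ij}/2$.
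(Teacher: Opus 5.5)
Your route is sound, but the constant you display in Step 1 is wrong, as you suspected. As a result, Step 4 reaches the lemma only because it is asserted rather than computed.

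\textbf{The constant in Step 1.} Restore the factors $\frac12$ in $\partial_i=\frac12(E_i-\sqrt{-1}JE_i)$ and $\overline{\partial}_j=\frac12(E_j+\sqrt{-1}JE_j)$. Then $R^M(\partial_i,\overline{\partial}_j)=\frac12\bigl(R^M(E_i,E_j)+\sqrt{-1}R^M(E_i,JE_j)\bigr)$. Also $\langle \mathcal{R}\,\partial_i,\overline{\partial}_j\rangle=\frac12\bigl(\langle \mathcal{R}E_i,E_j\rangle+\sqrt{-1}\langle \mathcal{R}E_i,JE_j\rangle\bigr)$ for any real skew $\mathcal{R}$ commuting with $J$. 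So the coefficient is $\frac14$, not $\frac12$.

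Write $S_T=\sum_{i,j}\langle R^M(E_i,E_j)E_j,E_i\rangle$ and $S_N=\sum_{i,j}\langle R^M(E_i,JE_j)JE_j,E_i\rangle$. The correct statement is $\sum_{i,j}R^M_{i\bar j i\bar j}(p)=\frac14(S_N-S_T)$. Your Step 2 gives $S_N=\frac12\mathrm{Scal}_M-S_T$, hence $\frac18\mathrm{Scal}_M-\frac12 S_T$, and Step 3 finishes.

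With your $\frac12$ you would get exactly twice the right-hand side. For example, for $\mathbb{RP}^n\subset\mathbb{CP}^n$ with the Fubini--Study metric itself, both sides of the lemma equal $n$, while your constant gives $2n$.

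\textbf{The imaginary part.} It equals $\frac12\sum_{i,j}\langle R^M(E_i,E_j)E_i,JE_j\rangle$. Moving $J$ across and using antisymmetry in both pairs shows the summand is odd under $i\leftrightarrow j$. More quickly, the sum is real because $\overline{R^M_{i\bar j i\bar j}}=R^M_{j\bar i j\bar i}$.

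\textbf{Comparison with the paper.} You run the change of frame in the opposite direction. The paper substitutes $E_i=\partial_i+\overline{\partial}_i$ into $\sum_{i,j}\langle R^M(E_i,E_j)E_j,E_i\rangle$. Since $R^M$ is of type $(1,1)$ in each pair on a K\"ahler manifold, this immediately gives $-2\sum R^M_{i\bar j i\bar j}+2\sum R^M_{i\bar j j\bar i}$. The second sum is the already recorded $\frac18\mathrm{Scal}_M$, and the Gauss equation ends the proof. That approach avoids the mixed block $S_N$, the real-frame splitting of $\mathrm{Scal}_M$, and any imaginary-part bookkeeping.

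Your version is longer and more exposed to constant errors, but it stays entirely in real terms. Through the identity $\sum_{i,j}R^M_{i\bar i j\bar j}=\frac14(S_T+S_N)=\frac18\mathrm{Scal}_M$ it effectively re-derives the fact the paper quotes, which is exactly the calibration you proposed.
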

\begin{proof}
At $p$, we have 
\begin{align*}
&\sum_{i, j=1}^n \langle R^M(E_i, E_j)E_j, E_i\rangle 
= \sum_{i, j=1}^n \langle R^M(\partial_i +\overline{\partial}_{i}, \partial_j + \overline{\partial}_j)(\partial_j +\overline{\partial}_{j}), \partial_i +\overline{\partial}_{i} \rangle \\
= & -2\sum_{i, j=1}^nR^{M}_{i\bar{j}i\bar{j}} + 2\sum_{i, j=1}^n R^M_{i\bar{j}j \bar{i}}
= -2\sum_{i, j=1}^nR^{M}_{i\bar{j}i\bar{j}} + \frac{1}{4} \mathrm{Scal}_{M}.  
\end{align*}
By the Gauss equation, we have 
\begin{align*}
\sum_{i, j=1}^n \langle R^{M}(E_i, E_j)E_{j}, E_i\rangle 
& = \sum_{i, j=1}^n \left(\langle R^{\Lambda}(E_i, E_j)E_j, E_i\rangle + |B(E_i, E_j)|^2 -\langle B(E_i, E_i), B(E_j, E_j) \rangle \right) \\
& =\mathrm{Scal}_{\Lambda} +|B|^2 -|H|^2 
\end{align*}
at $p$.  
This completes the proof.  
\end{proof}

\section{Asymptotic expansion of the Lagrangian state}\label{section:3}
We begin by recalling the proof of Theorem~\ref{theorem:1}. 
Since the proof of our main theorem is obtained by refining the asymptotic analysis used there, we repeat the argument in some detail and keep track of the terms needed for the computation of $c_{p,2}(0)$.

Let $p \in \Lambda$, and let $x = (x_1, \ldots, x_n)$ be Riemannian normal coordinates on $\Lambda$ centered at $p$.   
We take the map $\Phi$ and the adapted holomorphic coordinates
$z=(z_1,\ldots,z_n)$ of order $N$ with respect to $x$. 
Let $\zeta$ be the almost holomorphic extension to a neighborhood of $\Lambda$ introduced in the Introduction.  
We choose a holomorphic section $s$ of $L$ near $p$ such that $\zeta(p) = s(p)$.  

In what follows, we use the same adapted holomorphic coordinate chart
centered at $p$ on both factors of $M\times M$, and denote the corresponding
coordinates by $z$ and $w$, respectively.
All local constructions are understood in sufficiently small neighborhoods of $p$.
Define $\phi := -\log |s|_h^2$, which is a smooth plurisubharmonic function such that 
$\frac{\sqrt{-1}}{2} \partial \overline{\partial} \phi = \omega$.   
Let $\psi_0(z, \bar{w})$ be a smooth function such that $\overline{\psi_0(z, \overline{w})} = \psi_0(w, \bar{z})$, 
$\psi_0 (z, \bar{z}) = \phi(z)$, and $\overline{\partial} \psi_0$ vanishes to infinite order along the diagonal.  

The Bergman kernel $K_{k}$ is a smooth section of $L^k\boxtimes \overline{L}^k$ over $M \times M$.  
Let $\pi_j:M\times M\to M$ be the $j$-th projection, $j=1,2$. 
On a neighborhood of $(p,p)$, we write
$K_{k}/\pi_1^*s^k \pi_2^*\bar{s}^k = K_k/s^k \bar{s}^k$ for simplicity.
Then the Bergman kernel has the following expansion formula (see e.g., \cite{Ber-Ber-Sjo}): 
\begin{align*}
&\left|
\frac{K_k(z,w)}{s^k(z)\overline{s^k(w)}}
-\frac{k^n}{\pi^n}e^{k\psi_0(z,\bar w)}
\left(
1+\frac{b_1(z,\bar w)}{k}
+\cdots+
\frac{b_{\lceil N/2\rceil}(z,\bar w)}
{k^{\lceil N/2\rceil}}
\right)
\right| \\
\leq &  
e^{\frac{k}{2}\phi(z)+\frac{k}{2}\phi(w)}
O\left(
k^{-\lceil N/2\rceil-1+n}
\right),
\end{align*}
on a neighborhood of $(p,p)$, where $b_j$ are smooth functions. 
In particular,
$
b_1(z,\bar z)=\frac{1}{8}\mathrm{Scal}_M(z).
$
Let $c(z)=\frac{\zeta(z)}{s(z)}$ be a smooth function. 
Then $c(0)=1$, and $\overline{\partial}c$ vanishes to infinite order along $\Lambda$ near $p$.
Let $\varphi = -\log |\zeta|^2_h$.  
Then $\varphi(z) = \phi(z) - 2 \log |c(z)|$.  
Set $\psi(z, \bar{w}) = \psi_0(z, \bar{w}) - \log c(z) - \log \overline{c(w)}$, 
where we take the branch of the logarithm satisfying $\log 1=0$. 
Then
\begin{align*}
&\left|
\frac{K_k(z,w)}
{\zeta^k(z)\overline{\zeta^k(w)}}
-
\frac{k^n}{\pi^n}
e^{k\psi(z,\bar w)}
\left(
1+\frac{b_1(z,\bar w)}{k}
+\cdots+
\frac{b_{\lceil N/2\rceil}(z,\bar w)}
{k^{\lceil N/2\rceil}}
\right)
\right|
\\
\leq & 
e^{\frac{k}{2}\varphi(z)+\frac{k}{2}\varphi(w)}
O\left(
k^{-\lceil N/2\rceil-1+n}
\right).
\end{align*}
Since 
$\varphi \circ \Phi(x, y) = 2|y|^2 + O(|y|^3)$ and $\frac{\partial}{\partial z_{j}}\big|_{p} = \frac{1}{2} \left(\frac{\partial}{\partial x_j} -\sqrt{-1} \frac{\partial}{\partial y_j} \right)\big|_p$, 
we have 
$\varphi(z) = 2|\mathrm{Im}\, z|^2 + O(|z|^3)$. 
Since $\overline{\partial} \psi$ vanishes to infinite order along the diagonal set and $\psi(z, \bar{z}) = \varphi(z)$, 
the Taylor expansion shows that 
$
\psi(z, \bar{w}) = \frac{1}{2}(-z^2 - \bar{w}^2 +2z\bar{w})  + O(|z|^3+|w|^{3}).   
$
Let $(x',y')$ be a second copy of the coordinates $(x,y)$.
We set 
\begin{align*}
\psi_{\Phi}((x, y), (x', y')) & = \psi(z(\Phi(x, y)), \overline{z(\Phi (x', y'))}), \\
b_{\Phi, j}((x, y), (x', y')) & = b_j(z(\Phi(x, y)), \overline{z(\Phi (x', y'))}), \\
K_{k,\Phi}((x,y),(x',y')) &= K_k(\Phi(x,y),\Phi(x',y')).    
\end{align*}
We write the Taylor expansion of $\psi_{\Phi}$ as
\begin{align*}
\psi_{\Phi}((0,y),(x,0))
=  \frac{1}{2}|y|^2-\frac{1}{2}|x|^2 + \sqrt{-1}\,x\cdot y + R(y,x) + O\left(|y|^{N+3}+|x|^{N+3}\right),
\end{align*}
where
$
R(y,x)=\sum_{\ell=3}^{N+2}R_{\ell}(y,x),
$
and each $R_{\ell}(y,x)$ is a homogeneous polynomial of degree $\ell$ in $(y,x)$,
whose coefficients depend smoothly on $p$.
The precise form of these
coefficients will be important for the proof of Theorem~\ref{theorem:2} and will be
analyzed in Section~\ref{section:4}. 
For the moment, we continue the argument of \cite{Tib2}.
Replacing $y$ by $y/\sqrt{k}$ and multiplying by $k$, we have 
\begin{align*}
& k \psi_{\Phi}\left(\left(0, \frac{y}{\sqrt{k}}\right), (x, 0)\right) \\
= & \frac{1}{2} |y|^2 - \frac{k}{2}|x|^2 + \sqrt{-1}\sqrt{k} x\cdot y + kR\left(\frac{y}{\sqrt{k}}, x\right) + k O\left(\left|\frac{y}{\sqrt{k}}\right|^{N+3} + |x|^{N+3}\right).  
\end{align*}
In the Riemannian normal coordinates $x=(x_1,\ldots,x_n)$ on $\Lambda$, we write
\[
f\,dv_\Lambda
=
\left(
f(p)+\sum_{1\leq |I|\leq N}a_Ix^I+O(|x|^{N+1})
\right)
dx_1\cdots dx_n.
\]
Here $a_I \in \mathbb{C}$ is determined by the derivatives of $f$ and $\langle \frac{\partial}{\partial x_i}, \frac{\partial}{\partial x_j}\rangle$ at $p$.  
The coefficients $a_I$ needed below will also be computed explicitly in the next section.

It is known that the off-diagonal asymptotic of the Bergman kernel satisfies 
\[
|K_{k}(Z, Z')|_h \lesssim k^n e^{-c \sqrt{k}  d_{M}(Z, Z')}
\] 
for some $c > 0$, where $d_M(Z, Z')$ is the distance between $Z \in M$ and $Z' \in M$ (see Theorem~0.1 of \cite{Ma-Ma2}).  
Here $| \cdot |_h$ is the norm of $L^k \boxtimes \overline{L}^k$ induced by the Hermitian metric $h$ of $L$.  
Let $\eta:\mathbb{R} \to \mathbb{R}$ be a smooth function such that $\eta(r) = 1$ if $0 \leq r \leq 1$ and $\eta(r) = 0$ if $r \geq 2$.  
For $C_N>0$, set
$
\eta_{k,N}(x)
=
\eta\left(
\frac{\sqrt{k}|x|}{C_N\log k}
\right).
$
Taking $C_N$ sufficiently large, the off-diagonal estimate gives
\[
\left|
(1-\eta_{k, N}(x)) \frac{K_{k, \Phi}((0, \frac{y}{\sqrt{k}}), (x, 0))}{\zeta^k(\Phi(0, \frac{y}{\sqrt{k}}))\overline{\zeta^k(\Phi(x, 0))}}\right| 
= e^{\frac{k}{2} \varphi \left(\Phi(0, \frac{y}{\sqrt{k}})\right)} O\left(\frac{1}{k^{\lceil N/2 \rceil - n/2 + 1}}\right), 
\]
and we have 
\begin{align*}
&s_{f, k} \left(\Phi\left(0, \ldots, 0, \frac{y_1}{\sqrt{k}}, \ldots, \frac{y_n}{\sqrt{k}}\right)\right) \\
= & \left(\frac{\pi}{2k}\right)^{n/2} \int_{x \in \mathbb{R}^n} \eta_{k, N}(x) \frac{K_{k, \Phi}((0, \frac{y}{\sqrt{k}}), (x, 0))}{\overline{\zeta^k(\Phi(x, 0))}}  \left(f(p) + \sum_{1 \leq |I| \leq N} a_{I} x^{I} + O(|x|^{N+1}) \right) dx_1 \cdots dx_n \\
& +  \zeta^k \left(\Phi\left(0, \frac{y}{\sqrt{k}}\right)\right) e^{\frac{k}{2}\varphi\left(\Phi(0, \frac{y}{\sqrt{k}})\right)}O\left(\frac{1}{k^{\lceil N/2 \rceil +1}}\right).  
\end{align*}
For $y$ in any fixed bounded subset of $\mathbb{R}^n$, uniformly as $k\to\infty$, we have
$
k\varphi\circ\Phi\left(0,\frac{y}{\sqrt{k}}\right)
=2|y|^2+O\left(\frac{|y|^3}{\sqrt{k}}\right)
=O(1),
$
whereas
$
k\varphi\circ\Phi(x,0)=0
$
for all $x$ in the coordinate neighborhood.
We have 
\begin{align*}
& \left(\frac{\pi}{2k}\right)^{n/2}
\eta_{k, N}(x)\frac{K_{k,\Phi}\left(\left(0, \frac{y}{\sqrt{k}}\right), (x, 0)\right)}
{\overline{\zeta^k (\Phi(x, 0))}} 
\left(f(p) + \sum_{1 \leq |I| \leq N} a_{I} x^{I} + O(|x|^{N+1}) \right) dx_1 \cdots dx_n  \\
= &  
\left(\frac{k}{2\pi}\right)^{n/2} 
\eta_{k, N}(x)
\zeta^k \left( \Phi\left(0, \frac{y}{\sqrt{k}}\right) \right)
e^{\frac{|y|^2}{2} - k \frac{|x|^2}{2} + \sqrt{-1}\sqrt{k}\,x\cdot y + kR\left(\frac{y}{\sqrt{k}},x\right)+kO\left(\left|\frac{y}{\sqrt{k}}\right|^{N+3}+|x|^{N+3}\right)}\\
& \times  
\left(1 + \sum_{j=1}^{\lceil N/2 \rceil}
\frac{b_{\Phi,j}\left(\left(0, \frac{y}{\sqrt{k}}\right), (x, 0)\right)}{k^{j}} \right)  
\left(f(p) + \sum_{1 \leq |I| \leq N} a_{I} x^{I} + O(|x|^{N+1}) \right) dx_1 \cdots dx_n \\
& + \zeta^k\left(\Phi\left(0,\frac{y}{\sqrt{k}}\right)\right)
O\left(\frac{1}{k^{\lceil N/2 \rceil +1}}\right) dx_1\ldots dx_n.  
\end{align*}
By changing variables $x\mapsto x/\sqrt{k}$ and absorbing the cutoff error into the remainder, we obtain
\begin{align*} 
& \left(\frac{\pi}{2k}\right)^{n/2}
\int_{x \in \mathbb{R}^n} \eta_{k,N}(x)
\frac{K_{k,\Phi}\left(\left(0,\frac{y}{\sqrt{k}}\right),(x,0)\right)}
{\overline{\zeta^k(\Phi(x,0))}} 
\left(f(p) + \sum_{1 \leq |I| \leq N} a_{I} x^{I} + O(|x|^{N+1}) \right) dx_1 \cdots dx_n  \\
= & \zeta^k\left(\Phi\left(0,\frac{y}{\sqrt{k}}\right)\right)
\frac{e^{\frac{|y|^2}{2}}}{(2\pi)^{n/2}}
\int_{x \in \mathbb{R}^n} e^{-\frac{|x|^2}{2}+\sqrt{-1}x\cdot y}
\left(f(p) + \sum_{j=1}^{N}\frac{Q_{p,j}(y,x)}{k^{j/2}} \right) dx_1\cdots dx_n \\
& + \zeta^k\left(\Phi\left(0,\frac{y}{\sqrt{k}}\right)\right)
o\left(\frac{1}{k^{N/2}}\right).   
\end{align*}
uniformly for $y$ in any fixed bounded subset of $\mathbb{R}^n$.
Here, $Q_{p,j}(y,x)$ is a polynomial in $(y,x)$ of degree at most $3j$, determined by the Taylor coefficients of $R_{\ell}$, $b_{\Phi,j}$, and $a_I$.  
Each monomial appearing in $Q_{p,2j+1}(y,x)$ has odd total degree in $(y,x)$.  
For a multi-index $I=(i_1,\ldots,i_n)$ with $i_j\geq 0$, we have
\[
\frac{1}{(2\pi)^{n/2}}\int_{\mathbb{R}^n} x^{I} e^{-\frac{|x|^2}{2} + \sqrt{-1}x\cdot y}dx_1 \ldots dx_n 
= (-\sqrt{-1})^{|I|}\left(\frac{\partial}{\partial y}\right)^{I} e^{-\frac{|y|^2}{2}}.  
\]
Put 
\[
c_{p, j}(y) = \frac{e^{\frac{|y|^2}{2}}}{(2\pi)^{n/2}} \int_{x \in \mathbb{R}^n} e^{-\frac{|x|^2}{2}+\sqrt{-1}x\cdot y} Q_{p, j}(y, x) dx_1\cdots dx_n.   
\]
Then $c_{p,j}(y)$ is a polynomial in $y$ of degree at most $3j$. 
Moreover,
$c_{p,2j+1}(0)=0$, since each monomial in $Q_{p,2j+1}(0,x)$ has odd degree in $x$.
The coefficients of $c_{p, j}$ are smooth functions of $p$ determined by construction.  
Now we have the asymptotic expansion 
\[
\frac{s_{f, k}}{\zeta^k}  \left(\Phi\left(0, \frac{y}{\sqrt{k}}\right) \right)= f(p) + \sum_{j=1}^{N} \frac{c_{p, j}(y)}{k^{j/2}} + o\left(\frac{1}{k^{N/2}}\right).  
\]
The expansion is uniform for $y$ in any fixed bounded subset of $\mathbb{R}^n$ and $p\in\Lambda$, and this completes the proof of Theorem~\ref{theorem:1}.  

We now express the first two coefficients, $Q_{p,1}$ and $Q_{p,2}$, in terms of $a_I$, $R_{\ell}$, and $b_{\Phi,j}$.
Since 
\begin{align*}
&\exp\left(k R\left(\frac{y}{\sqrt{k}}, \frac{x}{\sqrt{k}} \right) + kO\left(\left|\frac{x}{\sqrt{k}} \right|^{N+3} + \left|\frac{y}{\sqrt{k}} \right|^{N+3}\right)\right) \\
= & 
1 + \frac{R_{3}(y, x)}{\sqrt{k}} + \frac{1}{k}\left(R_4(y, x) + \frac{R_{3}(y, x)^2}{2}\right) + O\left(\frac{1}{k^{3/2}} \right) 
\end{align*}
and 
\[
\frac{1}{k}b_{\Phi, 1}\left(\left(0, \frac{y}{\sqrt{k}} \right), \left(\frac{x}{\sqrt{k}}, 0 \right)\right) 
= \frac{\mathrm{Scal}_{M}(p)}{8k} + O\left( \frac{1}{k^{3/2}}\right),  
\]
we have 
\begin{align*}
Q_{p, 1}(y, x) & =  R_3(y, x) f(p) + \sum_{j=1}^n a_jx_j , \\ 
Q_{p, 2}(y, x) & = \left(R_4(y, x) + \frac{R_3(y, x)^2}{2} + \frac{\mathrm{Scal}_M(p)}{8}\right) f(p) + R_{3}(y, x)\sum_{j=1}^n a_j x_j + \sum_{|I| = 2} a_{I} x^{I}.  
\end{align*}
To determine $c_{p,2}(0)$ explicitly, it remains to compute the relevant Taylor coefficients appearing in $R_3$, $R_4$, and the coefficients $a_j$ and $a_I$ ($|I| = 2$).

\section{Computation of the Taylor coefficients}\label{section:4}
We now compute the Taylor coefficients needed to determine $c_{p,2}(0)$ explicitly.  
We use the same notation as in the previous section. 
Fix $p\in\Lambda$,
and keep the Riemannian normal coordinates $x$, the adapted holomorphic
coordinates $z$ of order $N$ with respect to $x$, and the map $\Phi$ introduced there.
We assume that $N$ is sufficiently large.

First, we compute $R_3(0,x)$. 
\begin{lemma}\label{lemma:4}
\[
R_{3}(0, x) = \frac{\sqrt{-1}}{3!} \sum_{i, j, k = 1}^n h_{ijk}(p) x_i x_j x_k.   
\]
\end{lemma}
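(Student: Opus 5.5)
The plan is to reduce $R_3(0,x)$ to third antiholomorphic derivatives of $\varphi=-\log|\zeta|_h^2$ at $p$, and then compute these using the vanishing of $\varphi$ along $\Lambda$ together with Lemma~\ref{lemma:2}.

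\emph{Step 1: reduce to $\varphi$.} By definition, $R_3(0,x)$ is the cubic part of $\psi_\Phi((0,0),(x,0))=\psi\bigl(0,\overline{w(x)}\bigr)$, where $w(x)=z\circ\Phi(x,0)=x+O(|x|^N)$ by Lemma~\ref{lemma:1}. For $N$ large, the correction $O(|x|^N)$ does not affect the cubic part, so I will take $w=x$, which is real. Since $\overline{\partial}\psi$ vanishes to infinite order along the diagonal, Taylor expansion of $w\mapsto\psi(0,\bar w)$ at $0$ involves only antiholomorphic monomials $\bar w^\alpha$, up to infinite-order errors. Differentiating the identity $\psi(z,\bar z)=\varphi(z)$ gives
\[
\partial^\alpha_{\bar w}\psi(0,0)=\partial^\alpha_{\bar z}\varphi(0),
\]
because each $\partial_{\bar z}$ that lands on the first slot of $\psi$ produces a term vanishing to infinite order. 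Hence
\[
R_3(0,x)=\frac{1}{3!}\sum_{i,j,k=1}^n \varphi_{\bar i\bar j\bar k}(p)\,x_ix_jx_k .
\]
It therefore suffices to show that $\varphi_{\bar i\bar j\bar k}(p)=\sqrt{-1}\,h_{ijk}(p)$.

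\emph{Step 2: the mixed derivatives of $\varphi$.} Since $\varphi=\phi-\log c-\log\bar c$ and $\overline{\partial}c$ vanishes to infinite order along $\Lambda$, we have $\varphi_{i\bar j}=\phi_{i\bar j}=g_{i\bar j}$ on $\Lambda$, up to errors vanishing to infinite order. The paper's normalization $\frac{\sqrt{-1}}{2}\partial\overline{\partial}\phi=\omega$ gives this with the right constant. Moreover $\varphi=0$ on $\Lambda$, because $\zeta$ is unitary there. Also $d\varphi=0$ on $\Lambda$, because $\nabla\zeta=0$ on $\Lambda$ and $d|\zeta|_h^2=2\,\mathrm{Re}\,h(\nabla\zeta,\zeta)$. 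In particular $\varphi_{\bar j}=0$ on $\Lambda$.

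\emph{Step 3: differentiate tangentially.} On $\Lambda$ we have $E_i=\partial_i+\overline{\partial}_i+O(|x|^{N-1})$, as in the proof of Lemma~\ref{lemma:2}.
\begin{itemize}
\item Applying $E_i$ to $\varphi_{\bar j}=0$ gives $\varphi_{i\bar j}+\varphi_{\bar i\bar j}=0$, so $\varphi_{\bar i\bar j}=-g_{j\bar i}$ on $\Lambda$, modulo $O(|x|^{N-2})$.
\item Applying $E_k$ to this identity at $p$ gives
\[
\varphi_{k\bar i\bar j}(p)+\varphi_{\bar k\bar i\bar j}(p)=-(\partial_k+\overline{\partial}_k)g_{j\bar i}(p).
\]
\item For the first term, $\varphi_{k\bar i\bar j}=\overline{\partial}_i g_{k\bar j}=\overline{\partial_i g_{j\bar k}}$. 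By Lemma~\ref{lemma:2} this equals $\overline{\sqrt{-1}h_{ijk}}=-\sqrt{-1}h_{ijk}$ at $p$.
\item For the right-hand side, Lemma~\ref{lemma:2} and Hermitian symmetry give $\partial_k g_{j\bar i}(p)=\sqrt{-1}h_{kji}$ and $\overline{\partial}_k g_{j\bar i}(p)=-\sqrt{-1}h_{kij}$. These cancel by the total symmetry of $h$.
\end{itemize}
Therefore $\varphi_{\bar i\bar j\bar k}(p)=\sqrt{-1}\,h_{ijk}(p)$, which gives the lemma.

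The main obstacle is bookkeeping rather than ideas. One must check that the errors $O(|x|^{N-1})$ in $E_j$ and the infinite-order errors coming from $\overline{\partial}\psi$ and $\overline{\partial}c$ do not affect third-order Taylor coefficients. One must also track the conjugations in $\varphi_{k\bar i\bar j}=\overline{\partial_i g_{j\bar k}}$ carefully, since a sign error there would change the answer from $\sqrt{-1}h$ to $-\sqrt{-1}h$ or $3\sqrt{-1}h$.
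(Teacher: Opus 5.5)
Your proposal is correct and follows essentially the same route as the paper: you reduce $R_3(0,x)$ to $\frac{1}{3!}\sum_{i,j,k}\varphi_{\bar i\bar j\bar k}(p)x_ix_jx_k$ via the almost-holomorphy of $\psi$, then differentiate $\varphi_{\bar k}=0$ twice along $\Lambda$, and evaluate the mixed third derivatives using $\varphi_{i\bar j}=g_{i\bar j}$, Lemma~\ref{lemma:2}, and the total symmetry of $h$. The paper applies both tangential derivatives at once to obtain its four-term identity, whereas you apply them in two stages, which yields the same identity. Your added justifications (the Step~1 reduction and $d\varphi=0$ on $\Lambda$ from $\nabla\zeta=0$) supply details the paper leaves implicit.
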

\begin{proof}
By the definition of $R_3$ and the
almost holomorphicity of $\psi$, we have
\begin{align*}
R_3(0,x)
&=
\frac{1}{3!}
\sum_{i,j,k=1}^n
\frac{\partial^3\psi_\Phi}
{\partial x'_i\partial x'_j\partial x'_k}
\bigl((0,0),(0,0)\bigr)x_ix_jx_k 
=
\frac{1}{3!}
\sum_{i,j,k=1}^n
\frac{\partial^3 \psi}
{\partial w_i\partial w_j\partial w_k}(0,0)
\,x_ix_jx_k \\
& =
\frac{1}{3!}
\sum_{i,j,k=1}^n
\frac{\partial^3\varphi}
{\partial\bar z_i\partial\bar z_j\partial\bar z_k}(0)
\,x_ix_jx_k.
\end{align*}
Since $d\varphi = 0$ identically on $\Lambda$, 
$
(\partial_i + \overline{\partial}_i) (\partial_j + \overline{\partial}_j) \varphi_{\bar{k}}(0) = 0.  
$
Hence 
\[
\varphi_{ij\bar{k}} + \varphi_{i\bar{j}\bar{k}} + \varphi_{\bar{i}j\bar{k}} + \varphi_{\bar{i}\bar{j}\bar{k}} = 0
\]
at $p$ for $1 \leq i, j, k \leq n$.  
Because $g_{i\bar{j}}(z) = \varphi_{i\bar{j}}(z) + O(|z|^{\infty})$, we have 
$\varphi_{\bar{i}\bar{j}\bar{k}} = -(\partial_{i} g_{j \bar{k}} + \overline{\partial}_j g_{i\bar{k}} + \overline{\partial}_{i} g_{j\bar{k}})$ at $p$.  
Then, by Lemma~\ref{lemma:2} and the symmetry of $h_{ijk}$, 
it follows that 
$\varphi_{\bar{i}\bar{j}\bar{k}} = \sqrt{-1} h_{ijk}$ at $p$.  
Hence we complete the proof.  
\end{proof}

Next, we compute $R_4(0,x)$. 
\begin{lemma}\label{lemma:5}
\[
R_4(0,x)
=
\frac{1}{4!}
\sum_{i,j,k,\ell=1}^n
\left(
\sum_{m=1}^n h_{ikm}(p)h_{j\ell m}(p)
-2R^M_{i\bar j k\bar\ell}(p)
+2\sqrt{-1}\,E_i h_{jk\ell}(p)
\right)
x_ix_jx_kx_\ell.
\]
\end{lemma}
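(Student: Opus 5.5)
The plan is to follow the proof of Lemma~\ref{lemma:4}. Since $z\circ\Phi(x,0)=x+O(|x|^N)$ with $N$ large, we have
\[
R_4(0,x)=\frac{1}{4!}\sum_{i,j,k,\ell=1}^n \frac{\partial^4\psi}{\partial\bar w_i\partial\bar w_j\partial\bar w_k\partial\bar w_\ell}(0,0)\,x_ix_jx_kx_\ell .
\]
Since $\psi(z,\bar z)=\varphi(z)$ and $\partial_{\bar z}\psi$ vanishes to infinite order on the diagonal, this coefficient equals $\varphi_{\bar i\bar j\bar k\bar\ell}(0)$. It therefore suffices to show that the symmetrization over $(i,j,k,\ell)$ of $\varphi_{\bar i\bar j\bar k\bar\ell}(p)$ equals the symmetrization of $\sum_m h_{ikm}h_{j\ell m}-2R^M_{i\bar jk\bar\ell}+2\sqrt{-1}E_ih_{jk\ell}$ at $p$.

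\textbf{Step 1 (reduction to metric derivatives).} The function $\varphi$ vanishes to second order along $\Lambda$, so $\varphi_{\bar\ell}=0$ on $\Lambda$. Applying $(\partial_i+\overline\partial_i)(\partial_j+\overline\partial_j)(\partial_k+\overline\partial_k)$, which is tangent to $\Lambda$ up to $O(|x|^{N-1})$, gives
\[
\varphi_{\bar i\bar j\bar k\bar\ell}(p)=-\sum(\text{terms with at least one unbarred index among } i,j,k).
\]
Every term on the right contains a mixed pair. Because $\partial\overline\partial\log|c|^2$ vanishes to infinite order on $\Lambda$, we may replace $\varphi_{a\bar b}$ by $g_{a\bar b}$. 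The right-hand side is then a combination of $\partial\partial g$, $\partial\overline\partial g$ and $\overline\partial\overline\partial g$ at $p$, and the Kähler symmetry $\partial_a g_{b\bar c}=\partial_b g_{a\bar c}$ lets us normalize the index positions.

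\textbf{Step 2 (the three second-derivative types).}
\begin{itemize}
\item[(a)] For mixed derivatives I use the Kähler curvature identity
\[
R^M_{i\bar jk\bar\ell}=-\partial_i\overline\partial_j g_{k\bar\ell}+\sum_{p,q}g^{p\bar q}\,\partial_i g_{k\bar q}\,\overline\partial_j g_{p\bar\ell},
\]
with $g_{a\bar b}(p)=\delta_{ab}$. Lemma~\ref{lemma:2} gives $\partial_ig_{k\bar q}=\sqrt{-1}h_{ikq}$, and by conjugation $\overline\partial_jg_{p\bar\ell}=-\sqrt{-1}h_{j\ell p}$. Hence $\partial_i\overline\partial_jg_{k\bar\ell}=-R^M_{i\bar jk\bar\ell}+\sum_m h_{ikm}h_{j\ell m}$.
\item[(b)] For holomorphic second derivatives I differentiate the full identity of Lemma~\ref{lemma:2} along $E_i=\partial_i+\overline\partial_i$ and use Lemma~\ref{lemma:2-1}. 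This yields
\[
\partial_i\partial_jg_{k\bar\ell}+\overline\partial_i\partial_jg_{k\bar\ell}=\tfrac13\bigl(R^\Lambda_{ijk\ell}+R^\Lambda_{ikj\ell}\bigr)+\sqrt{-1}E_ih_{jk\ell},
\]
so $\partial\partial g$ is determined by (a).
\item[(c)] The antiholomorphic derivatives $\overline\partial\overline\partial g$ are obtained by complex conjugation of (b), using $\overline{g_{a\bar b}}=g_{b\bar a}$.
\end{itemize}

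\textbf{Step 3 (assembly).} Substitute (a)--(c) into Step 1 and symmetrize over $i,j,k,\ell$, which is allowed because only the contraction with $x_ix_jx_kx_\ell$ matters. The $R^\Lambda$ contributions should cancel under full symmetrization by the antisymmetry of $R^\Lambda_{ijk\ell}$ in its last two indices and the Bianchi identity. The $R^M$ terms and the $hh$ terms combine, using $R^M_{i\bar jk\bar\ell}=R^M_{k\bar ji\bar\ell}$, and the $E_ih_{jk\ell}$ terms survive with coefficient $2\sqrt{-1}$.

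I expect the main obstacle to be the bookkeeping in Step 3. One must count how many of the $16$ expansion terms fall into each type, and track the signs and factors of $\sqrt{-1}$ that conjugation introduces in (c). In particular, the net coefficients $-2$ for $R^M$ and $+1$ for the $hh$ term come from partial cancellation between the type (a) terms and the (a)-contributions hidden inside (b) and (c). I would verify these coefficients on a flat test case, for example a curved Lagrangian graph in $\mathbb{C}^n$, before trusting the general count.
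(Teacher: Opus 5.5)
Your route is the paper's: reduce to $\varphi_{\bar i\bar j\bar k\bar\ell}(p)$ using $d\varphi=0$ on $\Lambda$, and split the seven remaining terms into one $\partial\partial g$, three $\partial\overline\partial g$ and three $\overline\partial\overline\partial g$ terms. You then use the curvature formula, Lemma~\ref{lemma:2} with Lemma~\ref{lemma:2-1}, and conjugation. However, step (a) is off by a factor of $2$, and as written your argument produces $-R^M_{i\bar jk\bar\ell}$ rather than $-2R^M_{i\bar jk\bar\ell}$ in the final formula. In this paper $R^M_{i\bar jk\bar\ell}=\langle R^M(\partial_i,\overline\partial_j)\partial_k,\overline\partial_\ell\rangle$ uses the Riemannian metric extended bilinearly. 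For that metric $\langle\partial_k,\overline\partial_\ell\rangle=\tfrac12 g_{k\bar\ell}$, so $\langle\partial_a,\overline\partial_b\rangle(p)=\delta_{ab}/2$ while $g_{a\bar b}(p)=\delta_{ab}$. Since $R^M(\partial_i,\overline\partial_j)\partial_k=-(\overline\partial_j\Gamma^m_{ik})\partial_m$, one gets at $p$
\[
2R^M_{i\bar jk\bar\ell}=-\partial_i\overline\partial_j g_{k\bar\ell}+\sum_{m}\partial_i g_{k\bar m}\,\overline\partial_j g_{m\bar\ell},
\qquad\text{hence}\qquad
\partial_i\overline\partial_jg_{k\bar\ell}(p)=\sum_m h_{ikm}h_{j\ell m}-2R^M_{i\bar jk\bar\ell}(p).
\]
Your identity is the correct one for curvature measured with the Hermitian matrix $g$ itself, not for the paper's $R^M$.

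The missing factor also cannot emerge in Step 3 as you expect. Suppose (a) reads $\partial_i\overline\partial_jg_{k\bar\ell}=\sum_m h_{ikm}h_{j\ell m}+c\,R^M_{i\bar jk\bar\ell}$. After contraction with $x_ix_jx_kx_\ell$, the $R^M$ contributions of the $\partial\partial g$, $\partial\overline\partial g$ and $\overline\partial\overline\partial g$ groups are $-c$, $3c$ and $-3c$ (via (b), (a), (c) respectively). With the overall minus sign, the coefficient in $4!\,R_4(0,x)$ is therefore exactly $c$. Likewise the $hh$ coefficient is exactly the one in (a), and the $E_ih_{jk\ell}$ terms give $-(\sqrt{-1}-3\sqrt{-1})=2\sqrt{-1}$. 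There is no partial cancellation, so the $-2$ must already be present in (a). Your proposed flat test, a Lagrangian graph in $\mathbb{C}^n$, has $R^M\equiv 0$ and cannot detect this. The $\mathbb{RP}^n\subset\mathbb{CP}^n$ example of Section~\ref{section:6} would: with $c=-1$ the resulting coefficient is $\tfrac{7n(n+2)}{48}$ instead of $\tfrac{n(n+2)}{8}$. Once (a) is corrected, the rest of your plan goes through exactly as in the paper. The $R^\Lambda$ terms vanish by antisymmetry in a pair of indices alone; the Bianchi identity is not needed.
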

\begin{proof}
By the same argument as above, using the adapted holomorphic coordinates, we have
\begin{align*}
\varphi_{\bar{i}\bar{j}\bar{k}\bar{\ell}}
& = -(\varphi_{ijk\bar{\ell}} +
\varphi_{ij\bar{k}\bar{\ell}} + 
\varphi_{i\bar{j}k\bar{\ell}} + 
\varphi_{i\bar{j}\bar{k}\bar{\ell}} + 
\varphi_{\bar{i}jk\bar{\ell}} + 
\varphi_{\bar{i}j\bar{k}\bar{\ell}} + 
\varphi_{\bar{i}\bar{j}k\bar{\ell}}) \\
& = 
-(
\partial_i\partial_j g_{k\bar\ell}
+\partial_i\overline{\partial}_k g_{j\bar\ell}
+\partial_i\overline{\partial}_j g_{k\bar\ell}
+\overline{\partial}_j\overline{\partial}_k g_{i\bar \ell}
+\partial_j\overline{\partial}_i g_{k\bar\ell}
+\overline{\partial}_i\overline{\partial}_k g_{j\bar\ell}
+\overline{\partial}_i\overline{\partial}_j g_{k\bar\ell}
)
\end{align*}
at $p$, and 
\begin{align*}
R_4(0,x)
= &
\frac{1}{4!}
\sum_{i,j,k,l=1}^n
\varphi_{\bar i\bar j\bar k\bar l}(0)
x_ix_jx_kx_l\\
= & -\frac{1}{4!} \sum_{i, j, k, \ell=1}^n \left(\partial_i \partial_j g_{k\bar{\ell}}(0) + 3 \partial_i \overline{\partial}_{j} g_{k \bar{\ell}}(0) + 3\overline{\partial}_i \overline{\partial}_{j} g_{k \bar{\ell}}(0)\right) x_ix_jx_kx_{\ell}.   
\end{align*}

We first compute the mixed derivatives $\partial_i\overline{\partial}_j g_{k\bar\ell}$.
Let $G = (g_{i\bar{j}})_{1 \leq i, j \leq n}$.  
We have
\[
R^M|_{T^{(1, 0)}(M)}=\overline{\partial}({}^tG^{-1}\partial \,{}^tG) 
= -\overline{\partial}\, {}^t G \wedge \partial\, {}^t G + \overline{\partial} \partial \,{}^t G
\] 
at $p$.  
Then, 
\begin{align*}
R^M_{i\bar{j} k \bar{\ell}} = \frac{1}{2} \left(\sum_{m = 1}^n \partial_i g_{k \bar{m}} \overline{\partial}_{j}g_{m\bar{\ell}} - \partial_{i}\overline{\partial}_j g_{k \bar{\ell}}\right) 
= \frac{1}{2} \left(\sum_{m=1}^n h_{ikm}h_{jm\ell} - \partial_i \overline{\partial}_j g_{k\bar{\ell}}\right).  
\end{align*}
at $p$, and 
hence, 
\[
\partial_i \overline{\partial}_j g_{k \bar{\ell}}(p) = \sum_{m=1}^n h_{ikm}(p) h_{j\ell m}(p) - 2R^M_{i\bar{j} k \bar{\ell}}(p). 
\]

Next, we compute $\partial_i\partial_j g_{k\bar\ell}$.  
By Lemma~\ref{lemma:2}, 
\begin{align*}
\partial_j g_{k \bar{\ell}}|_{\Lambda} = \langle \nabla^{\Lambda}_{E_j}E_{k}, E_{\ell} \rangle + \sqrt{-1} h_{jk \ell} + O(|x|^{N-2}).  
\end{align*}  
Then 
\[
E_i \partial_j g_{k \bar{\ell}} = \frac{1}{3} (R_{ijk\ell}^{\Lambda} + R_{ikj\ell}^{\Lambda}) + \sqrt{-1} E_i h_{jk\ell} 
\]
at $p$ by Lemma~\ref{lemma:2-1}.  
Hence, we obtain
\begin{align*}
\partial_i \partial_j g_{k \bar{\ell}} = E_{i} \partial_j g_{k \bar{\ell}} - \overline{\partial}_{i} \partial_j g_{k \bar{\ell}} 
= \frac{1}{3} (R_{ijk\ell}^{\Lambda} + R_{ikj\ell}^{\Lambda}) + \sqrt{-1} E_i h_{jk\ell} 
- \sum_{m=1}^n h_{i\ell m}h_{j k m} +2R_{j\bar{i} k\bar{\ell}}^M 
\end{align*}
at $p$.  
Since $\overline{R^{M}_{j\bar{i} \ell \bar{k}}} = R^{M}_{i\bar{j}k\bar{\ell}}$, we have 
\[
\overline{\partial}_i \overline{\partial}_j g_{k \bar{\ell}}  
= \frac{1}{3} (R_{ij\ell k}^{\Lambda} + R_{i\ell jk}^{\Lambda}) - \sqrt{-1} E_i h_{jk\ell} 
- \sum_{m=1}^n h_{ik m}h_{j \ell m} +2R_{i\bar{j} k\bar{\ell}}^M.   
\]
We note that 
$\sum_{i,j,k,\ell = 1}^n R_{ijk\ell}^{\Lambda} x_ix_jx_kx_{\ell} = 0$, and 
\[
\sum_{i, j, k, \ell=1}^n\sum_{m= 1}^n h_{ikm}h_{j\ell m} x_i x_j x_k x_{\ell} = \sum_{i, j, k, \ell=1}^n\sum_{m= 1}^n h_{i\ell m}h_{jk m} x_i x_j x_k x_{\ell}. 
\] 
Combining the above identities, we obtain the lemma.
\end{proof}

For $1 \leq i, j \leq n$, we define $a_i$, $a_{ij} = a_{ji}$ by 
\[
fdv_{\Lambda} = \left(f(p) + \sum_{i=1}^n a_i x_i + \sum_{i, j=1}^n a_{ij} x_ix_j + O(|x|^3) \right)dx_1 \cdots dx_n.  
\]
\begin{lemma}\label{lemma:6}
\[
a_i = E_{i}f (p), \quad a_{ij} = \frac{1}{2}E_i E_j f(p) -\frac{1}{6} f(p) \sum_{m = 1}^n R^{\Lambda}_{mijm}.    
\]
\end{lemma}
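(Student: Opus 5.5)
The plan is to expand the two factors of $f\,dv_\Lambda$ separately in the normal coordinates $x$ and multiply. Write $dv_\Lambda=\sqrt{\det(g^\Lambda_{ij}(x))}\,dx_1\cdots dx_n$, where $g^\Lambda_{ij}=\langle \partial/\partial x_i,\partial/\partial x_j\rangle$ is the induced metric. Taylor's theorem gives
\[
f(x)=f(p)+\sum_i E_if(p)\,x_i+\frac12\sum_{i,j}E_iE_jf(p)\,x_ix_j+O(|x|^3),
\]
since $E_i=\partial/\partial x_i$ at $p$. Because the coordinates are normal, the second-order coordinate derivatives $\partial_{x_i}\partial_{x_j}f(p)$ agree with the covariant Hessian at $p$. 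We must still check that this equals $E_iE_jf(p)$ for the chosen frame. This holds because the frame $X_j$ coincides with $\partial/\partial x_j$ at $p$. If necessary we can take $X_j$ to be $\partial/\partial x_j$ orthonormalized, which agrees with $\partial/\partial x_j$ to second order along radial lines, so the symmetrized second derivatives match.

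The main geometric input is the standard expansion of the metric in Riemannian normal coordinates,
\[
g^\Lambda_{ij}(x)=\delta_{ij}-\frac13\sum_{k,\ell}R^\Lambda_{ikj\ell}(p)\,x_kx_\ell+O(|x|^3)
\]
(up to sign conventions). We will re-derive it in the paper's convention from Lemma~\ref{lemma:2-1}. Since $\Gamma(p)=0$, the metric satisfies $\partial_{x_k} g^\Lambda_{ij}=\sum_m(\Gamma^m_{ki}g^\Lambda_{mj}+\Gamma^m_{kj}g^\Lambda_{im})$. Differentiating once more and evaluating at $p$ gives
\[
\partial_{x_\ell}\partial_{x_k}g^\Lambda_{ij}(p)=E_\ell\Gamma^j_{ki}+E_\ell\Gamma^i_{kj}=\frac13\left(R^\Lambda_{\ell k i j}+R^\Lambda_{\ell i k j}+R^\Lambda_{\ell k j i}+R^\Lambda_{\ell j k i}\right).
\]
By antisymmetry in the last pair, $R^\Lambda_{\ell kij}+R^\Lambda_{\ell kji}=0$, so only $\frac13(R^\Lambda_{\ell ikj}+R^\Lambda_{\ell jki})$ survives.

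Next we take the trace. We have $\sqrt{\det g^\Lambda}=1+\frac12\operatorname{tr}(g^\Lambda-I)+O(|x|^3)$, and
\[
\frac12\sum_i g^\Lambda_{ii}-\frac n2=\frac14\sum_{i,k,\ell}\partial_\ell\partial_k g^\Lambda_{ii}(p)\,x_kx_\ell=\frac16\sum_{i,k,\ell}R^\Lambda_{\ell iki}\,x_kx_\ell .
\]
Using the pair symmetry and antisymmetry of $R^\Lambda$, we rewrite $R^\Lambda_{\ell iki}=-R^\Lambda_{i\ell ki}$. This yields the coefficient $-\frac16\sum_m R^\Lambda_{m k\ell m}$ after relabeling, in agreement with the claimed sign. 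For the sphere, $R^\Lambda_{mijm}$ with the paper's convention is negative when $i=j$, so the volume factor shrinks as it should; this is a useful consistency check.

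Finally we multiply the two expansions. There is no first-order term in the volume density, so $a_i=E_if(p)$, and $a_{ij}$ is the symmetrized quadratic coefficient, as stated. The main obstacle is keeping the curvature sign and index conventions consistent with the paper's definition of $R^\Lambda_{ijk\ell}$. I will handle this by deriving the metric expansion from Lemma~\ref{lemma:2-1} rather than quoting it.
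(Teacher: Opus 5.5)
Your proposal is correct and follows the paper's argument: Taylor-expand $f$ and the density $\sqrt{\det g^\Lambda}$ in the normal coordinates and multiply. The only difference is that you derive the second-order term of the density from Lemma~\ref{lemma:2-1}, via $\partial_{x_\ell}\partial_{x_k}g^\Lambda_{ij}(p)=\frac13(R^\Lambda_{\ell ikj}+R^\Lambda_{\ell jki})$, which is correct, where the paper just quotes the standard expansion; also, the paper defines $E_j=\partial/\partial x_j$, so your discussion of frames is unnecessary. Two sign slips appear, only in side remarks: in the paper's convention the metric expansion you quote should read $g^\Lambda_{ij}=\delta_{ij}+\frac13\sum_{k,\ell}R^\Lambda_{ikj\ell}x_kx_\ell+O(|x|^3)$, which is what your derivation actually gives, and on the unit sphere $R^\Lambda_{miim}=+1$ for $m\neq i$ (since $\mathrm{Scal}_\Lambda=\sum R^\Lambda_{ijji}$), which is exactly why $-\frac16\sum_m R^\Lambda_{miim}|x|^2$ makes the volume factor shrink.
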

\begin{proof}
By the Taylor expansion at the origin, we have 
\begin{align*}
f(x) & = f(p) + \sum_{i = 1}^n E_i f(p) x_i + \frac{1}{2} \sum_{i, j=1}^n E_{i} E_{j} f (p) x_ix_j + O(|x|^3), \\
dv_{\Lambda}(x) & = \left(1 -\frac{1}{6}\sum_{i, j, m = 1}^n R^{\Lambda}_{m ij m} x_i x_j + O(|x|^3) \right) dx_1 \cdots dx_n.  
\end{align*}
This completes the proof.  
\end{proof}

\section{Proof of Theorem~\ref{theorem:2}}\label{section:5}
We now prove Theorem~\ref{theorem:2}.
By the asymptotic expansion obtained in Section~\ref{section:3} and Lemmas in Section~\ref{section:4}, 
we have 
\begin{align*}
& Q_{p, 2}(0, x) \\
= & \frac{1}{8} \mathrm{Scal}_M(p) f(p) + \sum_{i, j = 1}^n\left(\frac{1}{2}E_i E_j f(p) - \frac{f(p)}{6} \sum_{m = 1}^n R_{m ij m}^{\Lambda}(p) \right)x_i x_j \\
& \quad + \frac{f(p)}{24} \sum_{i, j, k, \ell=1}^n \left(\sum_{m=1}^n h_{ikm}(p)h_{j\ell m}(p) - 2 R_{i\bar{j}k\bar{\ell}}^M(p) + 2\sqrt{-1} E_i h_{jk\ell}(p)\right)x_ix_jx_kx_{\ell} \\
& \quad + \frac{\sqrt{-1}}{6}\sum_{i, j, k,\ell=1}^n h_{ijk}(p)E_{\ell} f(p) x_ix_jx_kx_{\ell} -\frac{f(p)}{72} \sum_{i, j, k, \ell, m, t=1}^n h_{ijk}(p)h_{\ell m t}(p) x_i x_j x_k x_{\ell}x_m x_t.    
\end{align*}
Since 
$
\frac{1}{(2\pi)^{n/2}}\int_{\mathbb{R}^n} e^{-\frac{|x|^2}{2}} x_ix_j dx_1 \cdots dx_n = \delta_{ij}, 
$
we have 
\begin{align*}
&\sum_{i, j=1}^n  \frac{1}{(2\pi)^{n/2}} \int_{\mathbb{R}^n} e^{-\frac{|x|^2}{2}} 
\left(\frac{1}{2}E_i E_j f(p) - \frac{1}{6}f(p) \sum_{m = 1}^n R_{m ij m}^{\Lambda}(p) \right)x_i x_j dx_1 \cdots dx_n \\
= & -\frac{1}{2}\Delta_{\Lambda} f(p) -  \frac{f(p)}{6} \mathrm{Scal}_{\Lambda} (p). 
\end{align*}
Since 
$
\frac{1}{(2\pi)^{n/2}} \int_{\mathbb{R}^n} e^{-\frac{|x|^2}{2}} x_ix_jx_kx_{\ell} dx_1 \cdots dx_n = \delta_{ij} \delta_{k \ell} + \delta_{ik}\delta_{j \ell} + \delta_{i \ell} \delta_{jk}, 
$
we have 
\begin{align*}
& \frac{1}{(2\pi)^{n/2}} \sum_{i, j, k, \ell=1}^n \int_{\mathbb{R}^n} e^{-\frac{|x|^2}{2}} \sum_{m=1}^n h_{ikm} h_{j\ell m} x_ix_jx_kx_{\ell} dx_1 \cdots dx_n \\
= & \sum_{m=1}^n \sum_{i, j=1}^n (2 h_{ijm}(p)^2 + h_{iim}(p)h_{jjm}(p)) 
= 2 |B(p)|^2 + |H(p)|^2,   
\end{align*}
\begin{align*}
& \frac{1}{(2\pi)^{n/2}} \sum_{i, j, k, \ell=1}^n \int_{\mathbb{R}^n} e^{-\frac{|x|^2}{2}} R_{i\bar{j}k\bar{\ell}}^M x_ix_jx_kx_{\ell} dx_1 \cdots dx_n \\
= & \sum_{i, j=1}^n (R_{i\bar{j}j\bar{i}}^M(p) + R_{i \bar{j} i \bar{j}}^M(p) + R_{i \bar{i} j \bar{j}}^M(p))
= \frac{3}{8} \mathrm{Scal}_M(p) -\frac{1}{2} \mathrm{Scal}_{\Lambda}(p) + \frac{1}{2} |H(p)|^2 -\frac{1}{2}|B(p)|^2 
\end{align*}
by Lemma~\ref{lemma:3}, and 
\begin{align*}
& \frac{1}{(2\pi)^{n/2}} \sum_{i, j, k, \ell=1}^n \int_{\mathbb{R}^n} e^{-\frac{|x|^2}{2}}  E_i h_{jk \ell}(p)  x_ix_jx_kx_{\ell} dx_1 \cdots dx_n 
=  3 \sum_{i, j =1}^n E_i h_{ijj}(p).  
\end{align*}
Now, $JH = - \sum_{i, j=1}^n h_{ijj}E_{i} + O(|x|^2)$.  
Hence, we have $\sum_{i, j=1}^n E_{i} h_{ijj}(p) = -\mathrm{div}_{\Lambda} (JH)(p)$.  

We also have 
\begin{align*}
\frac{1}{(2\pi)^{n/2}}\sum_{i, j, k, \ell = 1}^n \int_{\mathbb{R}^n} e^{-\frac{|x|^2}{2}} h_{ijk}(p) E_{\ell}f(p) x_ix_jx_kx_{\ell} dx_1 \cdots dx_n 
= 3\sum_{i, j=1}^n h_{iij} E_{j} f(p) = - 3JH f (p).  
\end{align*}

It remains only to evaluate the sixth--order Gaussian term. 
The Gaussian 
\[
\frac{1}{(2\pi)^{n/2}}\int_{\mathbb{R}^n} e^{-\frac{|x|^2}{2}}x_ix_jx_kx_{\ell}x_m x_t dx_1 \cdots dx_n
\]
is the sum of the products of Kronecker deltas corresponding to all pairings of the six indices.
There are $15$ such pairings.
For the contraction with $h_{ijk}h_{\ell mt}$, there are $6$ pairings in which all three pairs connect the two triples $(i,j,k)$ and $(\ell,m,t)$, and $9$ pairings in which only one pair connects the two triples.
Hence, we have 
\begin{align*}
& \frac{1}{(2\pi)^{n/2}}\sum_{i, j, k, \ell, m, t=1}^n \int_{\mathbb{R}^n} e^{-\frac{|x|^2}{2}} h_{ijk}(p) h_{\ell m t}(p) x_ix_jx_kx_{\ell}x_mx_t dx_1 \cdots dx_n \\
= & \sum_{i, j, k=1}^n( 6h_{ijk}^2(p) + 9 h_{iik}(p) h_{jjk}(p))
= 6|B(p)|^2 + 9|H(p)|^2.  
\end{align*}
By combining these equations, we obtain 
\begin{align*}
& \frac{1}{(2\pi)^{n/2}} \int_{\mathbb{R}^n} e^{-\frac{|x|^2}{2}}Q_{p, 2}(0, x) dx_1 \cdots dx_n \\
= & \left(\frac{3}{32} \mathrm{Scal}_M(p) -\frac{1}{8}\mathrm{Scal}_{\Lambda}(p)+ \frac{1}{24} |B(p)|^2 - \frac{1}{8}|H(p)|^2 \right) f(p) - \frac{1}{2} \Delta_{\Lambda} f(p) \\
& \quad- \sqrt{-1} \left(\frac{1}{4} \mathrm{div}_{\Lambda} JH(p) f(p) + \frac{1}{2} JH f (p) \right).   
\end{align*}
By the definition of $c_{p,2}(0)$, the left-hand side is $c_{p,2}(0)$.
This proves Theorem~\ref{theorem:2}.
\begin{proof}[Proof of Corollary~\ref{corollary:1}]
Since $s_{f, k} = \left(\frac{\pi}{2k}\right)^{n/2} R_k^* f$, 
we have 
\begin{align*}
& \left(\frac{2k}{\pi} \right)^{n/2} \|s_{f, k}\|^2_{h^k} 
= \int_{\Lambda} \frac{s_{f, k}}{\zeta^k} f dv_{\Lambda} \\
= &   
\int_\Lambda f^2\,dv_\Lambda 
+
\frac{1}{k}
\int_\Lambda
\left[
\left(
\frac{3}{32}\mathrm{Scal}_M
-\frac{1}{8}\mathrm{Scal}_\Lambda
+\frac{1}{24}|B|^2
-\frac{1}{8}|H|^2
\right)f^2
- \frac{1}{2} f\Delta_{\Lambda}f  
\right]dv_\Lambda \\
&\quad - \frac{\sqrt{-1}}{4k} \int_{\Lambda}\left(\mathrm{div}_{\Lambda}(JH) f^2 + 2fJH f \right) dv_{\Lambda}  
+
O\left(\frac{1}{k^2}\right).
\end{align*}
Since $\left(\frac{2k}{\pi} \right)^{n/2} \|s_{f, k}\|^2_{h^k} $ is real-valued, the imaginary part 
must vanish. 
However, this can also be verified directly from 
\[
\int_{\Lambda}\left(\mathrm{div}_{\Lambda}(JH) f^2 + 2f JH f \right) dv_{\Lambda} = 
\int_{\Lambda} \mathrm{div}_{\Lambda} (f^2 JH) dv_{\Lambda} = 0.  
\]
\end{proof}

\section{Examples}\label{section:6}
In this section, we give some examples in $M=\mathbb{CP}^n$.
Let $[X_0:X_1:\cdots:X_n]$ be the homogeneous coordinates on $\mathbb{CP}^n$.
Let
$
U=\{X_0\neq 0\}\simeq\mathbb{C}^n
$
be the standard affine chart, and let $z=(z_1,\ldots,z_n)$ be the holomorphic coordinates on $U$ defined by
$
z_j=\frac{X_j}{X_0}.
$
The standard Fubini--Study K\"ahler form on $\mathbb{CP}^n$ is given by
\[
\omega_{FS}
=
\frac{\sqrt{-1}}{2}\partial\bar\partial\log(1+|z|^2)
=
\frac{\sqrt{-1}}{2}
\sum_{i,j=1}^n
\left(
\frac{\delta_{ij}}{1+|z|^2}
-
\frac{\bar z_i z_j}{(1+|z|^2)^2}
\right)
dz_i\wedge d\bar z_j.
\]
We use the superscript or subscript $FS$ to indicate geometric quantities computed with respect to the Fubini--Study metric; for example, we write $\mathrm{Scal}_M^{FS}$ and $\langle\cdot,\cdot\rangle_{FS}$.
We also note that the Riemannian curvature operator is unchanged under constant rescaling of the metric.

At $z=0$, we have
\begin{align*}
\left\langle
R^{M}(\partial_i,\bar\partial_j)\partial_k,
\bar\partial_\ell
\right\rangle_{FS}
=
\left. -\frac12
\partial_i\bar\partial_j
\left(
\frac{\delta_{k\ell}}{1+|z|^2}
-
\frac{\bar z_k z_\ell}{(1+|z|^2)^2}
\right)\right|_{z=0}
=
\frac12
\left(
\delta_{ij}\delta_{k\ell}
+
\delta_{jk}\delta_{i\ell}
\right).
\end{align*}
Since the Fubini--Study metric is invariant under the transitive action of $U(n+1)$ on $\mathbb{CP}^n$, 
the scalar curvature $\mathrm{Scal}_M^{FS}$ is constant on $\mathbb{CP}^n$ and 
\[
\mathrm{Scal}_{M}^{FS} = 8\sum_{i, j=1}^n \langle R^{M}(\partial_i,\bar\partial_j)\partial_j,
\bar\partial_i\rangle_{FS}
= 4n(n+1).  
\]
Next, we recall the Bergman kernel of $(\mathcal{O}(k), h_{FS}^k)$ with respect to the volume form $\frac{\omega_{FS}^n}{n!}$, $k \in \mathbb{N}$, on $\mathbb{CP}^n$.  
We identify $H^0(\mathbb{CP}^n,\mathcal{O}(k))$ with the space of homogeneous
polynomials of degree $k$ on $\mathbb C^{n+1}$. 
Let
$X= (X_0, \ldots, X_n), Y= (Y_0, \ldots, Y_n)$ be the coordinates
of the first and second variables, respectively.  
The Bergman kernel can be
represented by a function which is homogeneous of degree $k$ in $X$ and
anti-homogeneous of degree $k$ in $Y$.
Then, by the $U(n+1)$-invariance, the Bergman kernel of $\mathcal O(k)$ is of the form
\[
K_k^{FS}(X,Y)
=
C_{k,n}(X\cdot \overline{Y})^k,
\qquad
X\cdot \overline{Y}
=
\sum_{j=0}^n X_j\overline{Y}_j.
\]
We have 
\[
\int_{\mathbb{CP}^n} |K_{k}^{FS}(X, X)|_{h^{k}_{FS}} \frac{\omega_{FS}^n}{n!} 
=
\dim H^0(\mathbb{CP}^n,\mathcal O(k)).
\]
Since 
\[
\frac{\omega_{FS}^n}{n!} = \frac{
\frac{\sqrt{-1}^n}{2^n} dz_1\wedge d\bar{z}_{1} \cdots dz_n \wedge d\bar{z}_n}{(1 + |z|^2)^{n+1}}, 
\]
on $U$, 
it follows that 
$\int_{\mathbb{CP}^n} \frac{\omega^n_{FS}}{n!} = \frac{\pi^n}{n!}$.  
Using 
$
\dim H^0(\mathbb{CP}^n,\mathcal O(k))
= \binom{n+k}{n}
$
we obtain
\[
K_k^{FS}(X,Y) 
=
\frac{(n+k)!}{\pi^n k!}
(X\cdot\overline{Y})^k.
\]
\vspace{2mm}\\
(1) 
We consider the real projective space
$
\Lambda=\mathbb{RP}^n\subset\mathbb{CP}^n.
$
We take
$(L,h)=(\mathcal O(2),h_{FS}^2)$,
$\omega=2\omega_{FS}$.

Since the corresponding Riemannian metric is twice the Fubini--Study metric,
we have 
$
\mathrm{Scal}_M=\frac12\mathrm{Scal}_M^{FS} = 2n(n+1).  
$
Moreover,
the Bergman kernel of $(L^k,h^k)$ with respect to
$\frac{\omega^n}{n!}$ is 
\[
K_k(X,Y)
=
\frac{(n+2k)!}{(2\pi)^n(2k)!}
(X\cdot\overline{Y})^{2k}.
\]
We compute $\mathrm{Scal}^{FS}_\Lambda$.
Let
$
\pi:S^n\longrightarrow \mathbb{RP}^n
$
be the standard double covering. The pullback by $\pi$ of the
Fubini--Study metric restricted to $\mathbb{RP}^n$ is the standard round
metric on $S^n$ of sectional curvature $1$. 
Hence
$
\mathrm{Scal}^{FS}_\Lambda=n(n-1)
$
and 
$\mathrm{Scal}_\Lambda = \frac{1}{2}\mathrm{Scal}_{\Lambda}^{FS} = \frac{n(n-1)}{2}$.
Since $\Lambda$ is the fixed point set of the anti-holomorphic isometry
\[
\tau:\mathbb{CP}^n\longrightarrow\mathbb{CP}^n,
\qquad
[X_0:\cdots:X_n]\longmapsto
[\overline{X}_0:\cdots:\overline{X}_n], 
\]
$\Lambda$ is a totally geodesic Lagrangian submanifold.
Hence
$B=0$, $H=0$.
The restriction of the holomorphic section
$
\zeta = X_0^2+\cdots+X_n^2
$
of $L$ to $\Lambda$ is a unit parallel section.
Hence, $\Lambda$ is the Bohr--Sommerfeld Lagrangian submanifold.  

Now we take $f = 1 \in C^{\infty}(\Lambda)$.  
At $p = [1:0:\ldots:0] \in \Lambda$, we have 
\[
\frac{s_{1, k}}{\zeta^k}(p) = \left(\frac{\pi}{2k}\right)^{n/2} \frac{(n+2k)!}{(2\pi)^n (2k)!}\frac{1}{2}\int_{S^{n}} Y_{0}^{2k}\, 2^{\frac{n}{2}}dg_{S^{n}}.  
\]
Here $dg_{S^n}$ is the density on $S^n$ induced by the standard round metric.  
To compute the integral, we use the polar coordinates
\[
(0,\pi)\times S^{n-1}\longrightarrow S^n,
\qquad
(\theta,q)\longmapsto(\cos\theta,\sin\theta\,q).
\]
In these coordinates,
$
g_{S^n}
=
d\theta^2+\sin^2\theta\,g_{S^{n-1}},
$
and hence
$
dg_{S^n}
=
(\sin\theta)^{n-1}\,d\theta\,dg_{S^{n-1}}.
$
Here, $g_{S^{n-1}}$ is the standard round metric on $S^{n-1}$.  
We obtain
\begin{align*}
\int_{S^n}Y_0^{2k}\,dg_{S^n}
=
\operatorname{Vol}(S^{n-1})
\int_0^\pi
\cos^{2k}\theta\,\sin^{n-1}\theta\,d\theta 
= \frac{2\pi^{\frac{n}{2}}}{\Gamma\left(\frac{n}{2}\right)} B\left(k+\frac{1}{2}, \frac{n}{2}\right) 
= \frac{2\pi^{\frac{n}{2}}\Gamma\left(k+\frac{1}{2}\right)}{\Gamma\left(k + \frac{n+1}{2}\right)}.
\end{align*}
For $a, b > 0$ we have the asymptotic formula 
\[
\frac{\Gamma(x+a)}{\Gamma(x+b)} = x^{a-b} \left(1 + \frac{(a-b)(a+b-1)}{2x} + O\left(\frac{1}{x^2}\right)\right) \qquad  (x \longrightarrow +\infty)
\]
(cf.\, Chapter~3 of \cite{Tem}).  
By using this formula, we obtain 
\begin{align*}
\frac{s_{1, k}}{\zeta^k}(p) & 
= \left(1 + \frac{n(n+1)}{4k} + O(k^{-2}) \right) \left(1-\frac{n^2}{8k} + O(k^{-2})\right)
= 1 + \frac{n(n+2)}{8k} + O(k^{-2}).  
\end{align*}
On the other hand, 
by substituting $\mathrm{Scal}_M = 2n(n+1)$, $\mathrm{Scal}_{\Lambda} = \frac{n(n-1)}{2}$, $B=H=0$ and $f = 1$ into Theorem~\ref{theorem:2}, we obtain the same coefficient.  
\vspace{3mm}\\
(2)
We consider the Clifford Torus $\Lambda = \{[X_0: \ldots :X_n] \in \mathbb{CP}^n \mid |X_0| = \cdots = |X_n|\}$.  
We take $(L, h) = (\mathcal{O}(n+1), h_{FS}^{n+1})$, $\omega = (n+1)\omega_{FS}$.  
Then, $\Lambda$ is a Lagrangian submanifold of $(M, \omega)$.  
The restriction of the holomorphic section $\zeta = (n+1)^{(n+1)/2} X_0 \cdots X_n$ of $L$ to $\Lambda$ is a unit parallel section.  
Hence, $\Lambda$ is the Bohr--Sommerfeld submanifold.  

In this case, $\mathrm{Scal}_M = \frac{1}{n+1} \mathrm{Scal}_{M}^{FS} = 4n$, and 
the Bergman kernel of $(L^k, h^k)$ with respect to $\frac{\omega^n}{n!}$ is 
\[
K_{k}(X, Y) = \frac{(kn+k+n)!}{(n+1)^n\pi^n (kn+k)!} (X\cdot \overline{Y})^{(n+1)k}.   
\]

The induced metric on $\Lambda$ is invariant under the translation action
of the torus. 
Hence, this metric is flat, and $\mathrm{Scal}_{\Lambda} = 0$.  
Let $\mathcal S\simeq\mathfrak S_{n+1}$ be the subgroup of $U(n+1)$
consisting of permutation matrices. It acts on $\mathbb{CP}^n$ by
\[
\sigma\cdot[X_0:\cdots:X_n]
=
[X_{\sigma(0)}:\cdots:X_{\sigma(n)}],
\qquad
\sigma\in\mathcal S.
\]
This action preserves both the Fubini--Study metric and $\Lambda$. 
Moreover, the point
$
p=[1:\cdots:1]\in\Lambda
$
is fixed by $\mathcal S$.
Hence $JH(p)$ is invariant under the action of $\mathcal S$, and we obtain $JH(p) = 0$.  
Therefore, we have $H = 0$ on $\Lambda$.  
Now we have $R_{i\bar{j}i\bar{j}}^M = \frac{\delta_{ij}}{n+1}$.  
Then, by Lemma~\ref{lemma:3}, it follows that 
$|B|^2 = \frac{n(n-1)}{n+1}$.  
Using the holomorphic coordinates $z=(z_1,\ldots,z_n)$ on $U$,
we define coordinates $\theta=(\theta_1,\ldots,\theta_n)$ on $\Lambda$ by
$
z_j=e^{\sqrt{-1}\theta_j}, j=1,\ldots,n.
$
Then 
\[
g_{\Lambda} = \sum_{i, j=1}^n\left(\delta_{ij}-\frac{1}{n+1}\right)d\theta_i d\theta_j
\]
and $dg_{\Lambda} = \frac{1}{\sqrt{n+1}}d\theta_1 \cdots d\theta_n$.  

We take $f = 1 \in C^{\infty}(\Lambda)$.  
At $p = [1:\ldots:1] \in \Lambda$, we have 
\begin{align*}
\frac{s_{1, k}}{\zeta^k}(p) & = \left(\frac{\pi}{2k}\right)^{n/2} \frac{(kn+k+n)!}{(n+1)^{n+k(n+1)+1/2}\pi^n (kn+k)!}\\
& \quad \times \int_{[0, 2\pi]^n} \frac{(1+e^{-\sqrt{-1} \theta_1} + \cdots + e^{-\sqrt{-1}\theta_n})^{k(n+1)}}{e^{-k(\sqrt{-1} \theta_1 + \cdots + \sqrt{-1} \theta_n)}} d\theta_1 \cdots d\theta_n \\
& = 
\left(\frac{2\pi}{k}\right)^{n/2}\frac{(kn+k+n)!}{(n+1)^{n+k(n+1)+1/2} (k!)^{n+1}}.  
\end{align*}
By Stirling's formula (see, e.g., \cite{Tem}), we have 
\[
\Gamma(x) = \sqrt{2\pi}x^{x-1/2} e^{-x} \left(1 + \frac{1}{12x} + O\left(\frac{1}{x^2}\right)\right)   \qquad (x \longrightarrow +\infty).  
\]
By putting $a = n+1$, $x = k+1$, we obtain 
\begin{align*}
\frac{s_{1, k}}{\zeta^k}(p) & 
= \left(\frac{2\pi}{k}\right)^{n/2} \frac{\Gamma(ax)}{a^{ax-1/2} \Gamma(x)^{a}} 
= \frac{1}{k^{n/2}} x^{(a-1)/2} \left(1+\frac{1}{12ax}+O\left(\frac{1}{x^2}\right)\right) \left(1-\frac{a}{12x}+O\left(\frac{1}{x^2}\right)\right) \\
& = \left(1+\frac{1}{k}\right)^{n/2} \left(1-\frac{n^2+2n}{12(n+1)(k+1)} + O\left(\frac{1}{k^2}\right)\right)
=1+ \frac{n(5n+4)}{12(n+1)} \frac{1}{k} + O\left(\frac{1}{k^2}\right).  
\end{align*}
On the other hand, by substituting $\mathrm{Scal}_M = 4n$, $\mathrm{Scal}_{\Lambda} = 0$, $H = 0$, $|B|^2 = \frac{n(n-1)}{n+1}$ and $f=1$ into Theorem~\ref{theorem:2}, we obtain the same coefficient.

\vspace{5mm}

\par\noindent{\scshape \small
Department of Mathematics, \\
Ochanomizu University,  \\
2-1-1 Otsuka, Bunkyo-ku, Tokyo (Japan) }
\par\noindent{\ttfamily chiba.yusaku@ocha.ac.jp}
\end{document}